\numberwithin{equation}{section}
\numberwithin{figure}{section}
\numberwithin{table}{section}
\newtheorem{theorem}{Theorem}[section]
\newtheorem{lemma}{Lemma}[section]
\newtheorem{proposition}{Proposition}[section]
\newtheorem{remark}{Remark}[section]
\begin{document}
\title[Operator splitting method]{A new efficient operator splitting method for stochastic Maxwell equations}\thanks{The research of C. Chen and J. Hong were supported by the NNSFC (NOs.  11971470, 11871068, 12022118, and 12031020), the research of L. Ji was supported by the NNSFC (NOs. 11601032, and 11971458)}
\author{Chuchu Chen}
\address{LSEC, ICMSEC, Academy of Mathematics and Systems Science, Chinese Academy of Sciences, and School of Mathematical Sciences, University of Chinese Academy of Sciences, Beijing 100049, China}
\email{chenchuchu@lsec.cc.ac.cn}
\author{Jialin Hong}
\address{LSEC, ICMSEC, Academy of Mathematics and Systems Science, Chinese Academy of Sciences, and School of Mathematical Sciences, University of Chinese Academy of Sciences, Beijing 100049, China}
\email{hjl@lsec.cc.ac.cn}
\author{Lihai Ji}
\address{Institute of Applied Physics and Computational Mathematics, Beijing 100094, People's Republic of China.}
\email{jilihai@lsec.cc.ac.cn}
\keywords{stochastic Maxwell equations, operator splitting method,  mean square convergence order}
 %  60H15, 60H35, 65M12, 35Q61
	\begin{abstract}
	This paper proposes and analyzes a new operator splitting method for stochastic Maxwell equations driven by additive noise, which not only decomposes the original multi-dimensional system into some local one-dimensional subsystems, but also separates the deterministic and stochastic parts. This method is numerically efficient, and preserves the symplecticity, the multi-symplecticity as well as the growth rate of the averaged energy. A detailed $H^2$-regularity analysis of stochastic Maxwell equations is obtained, which is a crucial prerequisite of the error analysis. Under the regularity assumptions of the initial data and the noise, the convergence order one in mean square sense of the operator splitting method is established.
	\end{abstract}
%\date{}							% Activate to display a given date or no date

\maketitle

\section{Introduction}\label{sec:1}

This paper  proposes and analyzes  a new operator splitting method for solving the following three-dimensional stochastic Maxwell equations with additive noise:
\begin{subequations}\label{sto_max}
	\begin{align}
&{\rm d}{\bf E}(t)={\rm curl} \;{\bf H}(t){\rm d}t+\lambda_1{\rm d}W(t),\qquad (t,{\bf x})\in (0,T]\times D,\label{sto_max_1}\\[2mm]
&{\rm d}{\bf H}(t)=-{\rm curl} \; {\bf E}(t){\rm d}t+\lambda_2{\rm d}W(t),\qquad (t,{\bf x})\in (0,T]\times D,\label{sto_max_2}\\[2mm]
%&{\bf n}\times {\bf E}={\bf 0},~~{\bf n}\cdot {\bf H}=0,\quad\qquad (t,{\bf x})\in (0,T]\times \partial D,\\[2mm]
&{\bf E}(0,{\bf x})={\bf E}_0({\bf x}),~~{\bf H}(0,{\bf x})={\bf H}_0({\bf x}),\qquad {\bf x}\in D,
	\end{align}
\end{subequations}
where $D=(a_1^-,a_1^+)\times (a_2^-,a_2^+)\times (a_3^-,a_3^+)\subseteq {\mathbb R}^{3}$ is  a cuboid, ${\bf E}$ and ${\bf H}$ represent the electric field and the magnetic field, respectively, and $\lambda_i\in{\mathbb R}^3$, $i=1,2,$ describe the amplitude of the noise. Here, $\{W(t)\}_{t\in[0,T]}$ is a standard $Q$-Wiener process with respect to a filtered probability space $(\Omega,{\mathcal F},\{{\mathcal F}_{t}\}_{0\leq t\leq T},{\mathbb P})$ with $Q$ being a  symmetric, positive definite operator with finite trace on a Hilbert space $U$.

Stochastic Maxwell equations provide the foundation in  stochastic electromagnetism and statistical radiophysics etc., when considering thermodynamic fluctuations or wave propagation in random field (cf. \cite{RKT1989}).  It is of special importance to develop efficient numerical methods for %effective and accurately
 simulating stochastic Maxwell equations in large scale and long time computations.
In the last decade, various numerical methods for stochastic Maxwell equations have been developed, analyzed, and tested in the literature. For instance, see \cite{C2020,Zhang2008} for the finite element method and discontinuous Galerkin method, \cite{BAZC2010} for  Wiener chaos expansion method, \cite{CHJ2019a,CHJ2019b, CCHS2020} for temporally semi-discrete methods, \cite{CHZ2016,HJZ2014,HJZC2017} for stochastic multi-symplectic methods.

Taking the multi-dimensional character of stochastic Maxwell equations into consideration,  the operator splitting method (or the dimension splitting method) is one of the most promising methods in the reduction of computational costs.
The splitting technique  is introduced to the context of stochastic partial differential equations (SPDEs) in \cite{BG1989} for Zakai equation in filtering, and is extended to the general linear SPDEs in \cite{GK2003}, in order to decompose the vector field into the stochastic and deterministic parts. Afterwards, there are many papers using the splitting technique, and we refer to \cite{CH2017,CHLZ2019,L2013b} for the case of stochastic Schr\"odinger equation,  to \cite{BCH2019,BG2019} for the case of stochastic Allen--Cahn equation, and to \cite{BM2019,D2012} for the case of stochastic Navier--Stokes equations, etc. 

In this paper,
we propose a new operator splitting method for stochastic Maxwell equations, which combines the superiority of the decomposability of the original multi-dimensional system into some local one-dimensional subsystems (see c.f. \cite{CLL2010, HJS2015,KHZ2010} for the  case of deterministic Maxwell equations) and the feature of the separation of the deterministic and stochastic parts (see c.f. \cite{BG1989, GK2003}). 
To be specific, stochastic Maxwell equations \eqref{sto_max} are split into the following subsystems
%%%%%%%%%%%%%%%%%%
\begin{equation}\label{sect1: sub1}
\left\{
\begin{aligned}
{\rm d}E_1^{[1]}(t)=\lambda_1^1{\rm d} W(t)\\
{\rm d}H_1^{[1]}(t)=\lambda_2^1{\rm d}W(t)
\end{aligned}
\right. ,\quad
\left\{
\begin{aligned}
{\rm d}E_2^{[1]}(t)=-\partial_x H_3^{[1]} (t){\rm d}t\\
{\rm d}H_3^{[1]}(t)=-\partial_x E_2^{[1]}(t){\rm d}t
\end{aligned}
\right. ,\quad
\left\{
\begin{aligned}
{\rm d} E_3^{[1]}(t)=\partial_x H_2^{[1]}(t) {\rm d}t\\
{\rm d}H_2^{[1]}(t)=\partial_x E_3^{[1]}(t){\rm d}t
\end{aligned}
\right. ,
\end{equation}
\begin{equation}\label{sect1: sub2}
\left\{
\begin{aligned}
{\rm d}E_2^{[2]}(t)=\lambda_1^2{\rm d}W(t)\\
{\rm d}H_2^{[2]}(t)=\lambda_2^2{\rm d}W(t)
\end{aligned}
\right. ,\quad
\left\{
\begin{aligned}
{\rm d}E_1^{[2]}(t)=\partial_y H_3^{[2]}(t){\rm d}t\\
{\rm d} H_3^{[2]}(t)=\partial_y E_1^{[2]}(t) {\rm d}t
\end{aligned}
\right. ,\quad
\left\{
\begin{aligned}
{\rm d} E_3^{[2]}(t)=-\partial_y H_1^{[2]}(t) {\rm d}t\\
{\rm d}H_1^{[2]}(t)=-\partial_y E_3^{[2]}(t){\rm d}t
\end{aligned}
\right. ,
\end{equation}
\begin{equation}\label{sect1: sub3}
\left\{
\begin{aligned}
{\rm d} E_3^{[3]}(t)=\lambda_1^3{\rm d}W(t)\\
{\rm d} H_3^{[3]}(t)=\lambda_2^3{\rm d}W(t)
\end{aligned}
\right. ,\quad
\left\{
\begin{aligned}
{\rm d}E_1^{[3]}(t)=-\partial_z H_2^{[3]}(t){\rm d}t\\
{\rm d}H_2^{[3]}(t)=-\partial_z E_1^{[3]}(t){\rm d}t
\end{aligned}
\right. ,\quad
\left\{
\begin{aligned}
{\rm d}E_2^{[3]}(t)=\partial_z H_1^{[3]}(t){\rm d}t\\
{\rm d}H_1^{[3]}(t)=\partial_z E_2^{[3]}(t){\rm d}t
\end{aligned}
\right. .
\end{equation}
Thus
an approximation of \eqref{sto_max} in the interval $[t_{n-1},t_{n}]$ is split into three steps: solving the subsystem \eqref{sect1: sub1} and taking its solution at time $t_{n}$ as the initial value at time $t_{n-1}$ while solving the subsystem \eqref{sect1: sub2} again on $[t_{n-1},t_{n}]$, then repeating it to the subsystem \eqref{sect1: sub3}. Namely, the approximation $\{(\tilde{\bf E}(t_{n}),\tilde{\bf H}(t_{n}))\}_{1\leq n \leq N}$ is defined recurrently by 
\begin{equation}\label{sec1:eq1}
(\tilde{\bf E}(t_{n}),\tilde{\bf H}(t_{n}))=\Psi^{[3]}_{t_{n-1},t_{n}}\circ \Psi^{[2]}_{t_{n-1},t_{n}} \circ \Psi^{[1]}_{t_{n-1},t_{n}}(\tilde{\bf E}(t_{n-1}),\tilde{\bf H}(t_{n-1})),\quad n\geq 1,
\end{equation}
 where $(\tilde{\bf E}(t_{0}),\tilde{\bf H}(t_{0}))=({\bf E}_0,{\bf H}_0)$, and $\{\Psi^{[j]}_{s,t}:0\leq s\leq t\}$, $j=1,2,3$ denote  solution flows of  subsystems \eqref{sect1: sub1}-\eqref{sect1: sub3}, respectively.

The proposed decompositions of Maxwell operator and the vector $\lambda_i$ ($i=1,2$) are such that the components $E_1^{[1]}$ and $H_1^{[1]}$ in subsystem \eqref{sect1: sub1} only contain the stochastic part whose exact solutions can be given explicitly, and other components $E_2^{[1]}, E_3^{[1]},H_2^{[1]}, H_3^{[1]}$ only contain the deterministic part (i.e., two one-dimensional wave equations) whose spatial derivative  is in one space direction. The same holds for \eqref{sect1: sub2} and \eqref{sect1: sub3}. This feature significantly reduces the complexity of the problem and improves the computational efficiency. 
Moreover, we show that each subsystem is still a stochastic Hamiltonian PDE preserving both the infinite-dimensional stochastic symplectic structure and stochastic multi-symplectic conservation law. If each subsystem is solved exactly, then the operator splitting method \eqref{sec1:eq1} preserves the averaged energy exactly.

In order to study the mean square convergence of the operator splitting method \eqref{sec1:eq1}, a relevant prerequisite  is to provide the regularity theory of the solution of \eqref{sto_max}. Utilizing the analysis for mixed inhomogeneous boundary value problems for the Laplacian on a cuboid (\cite[Lemma 3.1]{ES2017}), the $L^2(\Omega; H^2(D)^6)$-regularity ($H^2$-regularity for short) for the solution of \eqref{sto_max} is proved  with certain assumptions being made only on the initial data and the noise; see Proposition \ref{pro_2.2}. Our main result states that for each $T>0$, there exists a constant $C$ independent of $\tau$ and $N$ such that 
\begin{equation}
\max_{0\leq n\leq N}\Big[{\mathbb E}\Big(\|{\bf E}(t_n)-\tilde{\bf E}(t_n)\|_{L^2(D)^3}^2+\|{\bf H}(t_n)-\tilde{\bf H}(t_n)\|_{L^2(D)^3}^2\Big)\Big]^{\frac12}\leq C\tau,
\end{equation}
where $\tau$ is the uniform step size.
Furthermore, it follows from a straightforward modification of the proof for this error estimate that the result still holds if we perturb the order of the subsystems, for example,  the splitting method by using another  order $\Psi^{[1]}_{t_{n-1},t_{n}}\circ \Psi^{[2]}_{t_{n-1},t_{n}} \circ \Psi^{[3]}_{t_{n-1},t_{n}}$ and so on. 

The paper is organized as follows. In Section 2, we introduce the basic setting and give the regularity analysis of  stochastic Maxwell equations. Section 3 presents the formulation of the operator splitting method and analyzes the preservation of stochastic symplecticity and multi-symplecticity by the phase flow of each subsystem. In Section 4, we establish error estimates in mean square sense of the operator splitting method.

\section{Properties of stochastic Maxwell equations}\label{preliminaries}
This section presents the regularity analysis of stochastic Maxwell equations. The regularity in $L^2(\Omega;H^k(D)^6)$ with $k=1,2$ is proved in detail, based on the analysis for mixed inhomogeneous boundary value problems for the Laplacian on a cuboid. Throughout this paper,  denote by $C$ a generic positive constant which may be different from line to line, but independent of the step size $\tau$ and the partition number $N$.
\subsection{Notation}
In this paper, we adopt the following conventions.  We write $Id$ for the identity operator and $v\cdot w$ for the Euclidean inner product. ${\bf E}=(E_1,E_2,E_3)^{\top}$ and ${\bf H}=(H_1,H_2,H_3)^{\top}$ represent the electric field and the magnetic field, respectively, and $\lambda_i=(\lambda_i^{1},\lambda_i^{2},\lambda_i^{3})^{\top}\in{\mathbb R}^3,$ $i=1,2,$ describe the amplitude of the noise.
	
We work on a finite time interval $[0,T]$, $T>0$, and on the cuboid $D=(a_1^-,a_1^+)\times (a_2^-,a_2^+)\times (a_3^-,a_3^+)\subseteq {\mathbb R}^{3}$ with Lipschitz boundary $\Gamma:=\partial D$. 
Denote
$
\Gamma_j^{\pm}:=\{x\in\bar{D}|~x_j=a_j^{\pm}\}$
and $\Gamma_j:=\Gamma_j^-\cup\Gamma_j^+
$
for $j\in\{1,2,3\}$. Then $\Gamma=\Gamma_1\cup \Gamma_2\cup \Gamma_3$.
Let ${\bf n}$ be the unit outward normal to $\Gamma$. 
Let $H^s(D)$, $s\in{\mathbb R}$ be the classical Sobolev space. The basic Hilbert space we work with is $V:=L^2(D)^6$ with inner product $\langle\cdot,\cdot\rangle_V$ and norm $\|\cdot\|_V$. Let us define two spaces related to the curl operator
\begin{align*}
&H({\rm curl},D)=\left\{ v\in L^2(D)^3:~~ {\rm curl}\; v\in L^2(D)^3  \right\},\\[1mm]
&H_0({\rm curl},D)=\left\{ v\in H({\rm curl},D):~~ {\bf n}\times v|_{\Gamma}=0  \right\}
\end{align*}
endowed with the norm
\begin{align*}
\|v\|_{H({\rm curl},D)}=\left( \|v\|^2_{L^2(D)^3}+\|{\rm curl}\; v\|^2_{L^2(D)^3} \right)^{\frac{1}{2}}.
\end{align*}
%Similarly, the spaces related to the divergence operator are defined by
%\begin{align*}
%&H({\rm div},D)=\{ v\in (L^2(D))^3:~~\nabla\cdot v\in(L^2(D))^3  \},\\[1mm]
%&H_0({\rm div},D)=\{ v\in H({\rm div},D):~~{\bf n}\cdot v|_{\Gamma}=0\}
%\end{align*}
%equipped with the following norm
%\begin{align*}
%\|v\|_{H({\rm div},D)}=\left( \|v\|^2_{(L^2(D))^3}+\|\nabla\cdot v\|^2_{(L^2(D))^3} \right)^{\frac{1}{2}}.
%\end{align*}

We enforce a perfectly electric conducting (PEC) boundary condition on $D$,
\begin{equation}\label{bc:E}
{\bf n}\times{\bf E}=0,\quad \text{on}\quad  [0,T]\times\Gamma.
\end{equation}

An important skew-adjoint operator which will be often used throughout the paper is the Maxwell operator
\begin{equation}\label{maxwell_operator}
M:=\left(\begin{array}{cc}
0 & {\rm curl} \\
-{\rm curl} & 0
\end{array}\right)
\end{equation}
with domain
\begin{equation}\label{domain}
\begin{split}
{\mathcal D}(M)&=\left\{\begin{pmatrix}
{\bf E} \\
{\bf H}
\end{pmatrix}\in V:~M\begin{pmatrix}
{\bf E} \\
{\bf H}
\end{pmatrix}=\begin{pmatrix}
 {\rm curl}\;{\bf H}\\
-{\rm curl}\;{\bf E}
\end{pmatrix}\in V,~ {\bf n}\times{\bf E}\Big|_{\Gamma}=0 \right\}\\[2mm]
&=H_0({\rm curl},D)\times H({\rm curl},D).
\end{split}
\end{equation}
The corresponding graph norm is $\|v\|_{\mathcal{D}(M)}:=(\|v\|_V^2+\|Mv\|_V^2)^{1/2}$. Recursively we could define the domain ${\mathcal D}(M^k):=\{u\in {\mathcal D}(M^{k-1}):~M^{k-1}u\in {\mathcal D}(M)\}$ of the  $k$-th power of the operator $M$, $k\in{\mathbb N}$, with ${\mathcal D}(M^0)=V$.

Let $U:=\{f\in L^2(D): ~f=0\mbox{ on }\Gamma\}$. If we denote an orthonormal basis of  $U$ by $\{e_k\}_{k\in{\mathbb N}}$, the Karhunen-Lo\`{e}ve expansion
yields
$
W(t,{\bf x})=\sum_{k\in{\mathbb N}}Q^{\frac12}e_k({\bf x})\beta_k(t),~t\in[0,\,T],~{\bf x}\in D,
$
with $\{\beta_k\}_{k\in{\mathbb N}}$ being a family of independent real-valued Brownian motions.
\begin{remark}
We point out that, under the assumption ${\bf n}\cdot {\bf H}_0|_{\Gamma}=0$, one can verify that the PEC boundary condition \eqref{bc:E} is equivalent to 
\begin{equation}\label{bc:H}
{\bf n}\cdot {\bf H}=0,\quad \text{on}\quad  [0,T]\times\Gamma.
\end{equation}
In fact, since $Q^{\frac12}h|_{\Gamma}=0$, for any $h\in U$,  \eqref{bc:E} together with \eqref{sto_max_2} leads to
\begin{equation*}
\begin{split}
{\rm d}({\bf n}\cdot{\bf H}(t))&=-{\bf n}\cdot {\rm curl}\;{\bf E}(t){\rm d}t+{\bf n}\cdot \lambda_2{\rm d}W(t)\\[0.5mm]
&=\left[\nabla\cdot({\bf n}\times{\bf E})-{\bf E}\cdot {\rm curl}\;{\bf n}\right]{\rm d}t+\sum_{k=1}^{\infty} {\bf n}\cdot\lambda_2 Q^{\frac12}e_{k}{\rm d}\beta_{k}(t)=0,
\end{split}
\end{equation*}
due to the identity $\nabla \cdot (u\times v)=v\cdot {\rm curl}~ u-u\cdot {\rm curl}\; v$ for the second equation.
The last equation holds because of \eqref{bc:E} and the fact that ${\bf n}$ can be written as the gradient of a parametrization of $\Gamma$ and then ${\rm curl}(\nabla f)=0$ for some scalar function $f$.
Thus one has
$
{\bf n}\cdot {\bf H}={\bf n}\cdot {\bf H}_0=0$ on  $ [0,T]\times\Gamma.
$
\end{remark}
%%%%%%%%%%
\subsection{Well-posedness} 
Let us give a short comment here on the well-posedness of stochastic Maxwell equations and  the uniform boundedness of the solution  in $L^2(\Omega;{\mathcal D}(M^k))$-norm. 
We note that, with the operator $M$ defined in \eqref{maxwell_operator}, stochastic Maxwell equations \eqref{sto_max} can be rewritten as
\begin{equation}\label{abstract form}
{\rm d}{\bf Z}(t)=M{\bf Z}(t){\rm d}t+\lambda{\rm d}W(t),~~t\in(0,T]; \quad{\bf Z}(0)={\bf Z}_0,
\end{equation}
where ${\bf Z}=({\bf E}^{\top},{\bf H}^{\top})^{\top}$, ${\bf Z}_0=({\bf E}_0^{\top},{\bf H}_0^{\top})^{\top}$ and $\lambda=(\lambda_1^{\top},\lambda_2^{\top})^{\top}$. 
\begin{lemma}{\rm \cite{EN2000}}
	The Maxwell operator $M$ defined in \eqref{maxwell_operator} with domain ${\mathcal D}(M)$ is closed, skew-adjoint on $V$, and generates a unitary $C_0$-semigroup $S(t)=e^{tM}$ on $V$ for $t\in\mathbb{R}^+$.
\end{lemma}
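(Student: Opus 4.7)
The plan is to verify the hypotheses of Stone's theorem, which asserts that a densely defined operator on a Hilbert space generates a unitary $C_0$-semigroup if and only if it is skew-adjoint. Since skew-adjointness automatically implies closedness, establishing $M^{\ast}=-M$ yields all three conclusions at once.

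First I would note that $C_c^{\infty}(D)^{6}\subset\mathcal{D}(M)$, which immediately gives density of $\mathcal{D}(M)$ in $V=L^{2}(D)^{6}$. Next, for skew-symmetry, let $u=(\mathbf{E},\mathbf{H})^{\top}$ and $u'=(\mathbf{E}',\mathbf{H}')^{\top}$ belong to $\mathcal{D}(M)$, and apply the vector Green identity
\[\int_{D}\mathrm{curl}(a)\cdot b\,d\mathbf{x}=\int_{D}a\cdot\mathrm{curl}(b)\,d\mathbf{x}+\int_{\Gamma}(\mathbf{n}\times a)\cdot b\,dS\]
with $(a,b)=(\mathbf{H},\mathbf{E}')$ and $(a,b)=(\mathbf{E},\mathbf{H}')$. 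Collecting terms gives
\[\langle Mu,u'\rangle_{V}+\langle u,Mu'\rangle_{V}=\int_{\Gamma}(\mathbf{n}\times\mathbf{H})\cdot\mathbf{E}'\,dS-\int_{\Gamma}(\mathbf{n}\times\mathbf{E})\cdot\mathbf{H}'\,dS.\]
The PEC condition $\mathbf{n}\times\mathbf{E}=0$ kills the second boundary integral, while the cyclic identity $(\mathbf{n}\times\mathbf{H})\cdot\mathbf{E}'=-\mathbf{H}\cdot(\mathbf{n}\times\mathbf{E}')$ combined with $\mathbf{n}\times\mathbf{E}'=0$ kills the first. Thus $M\subset -M^{\ast}$.

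The main step is the reverse inclusion $\mathcal{D}(M^{\ast})\subset\mathcal{D}(M)$. Given $u'=(\mathbf{E}',\mathbf{H}')^{\top}\in\mathcal{D}(M^{\ast})$ with $M^{\ast}u'=w=(w_{1},w_{2})^{\top}\in V$, I would test the defining identity $\langle Mu,u'\rangle_{V}=\langle u,w\rangle_{V}$ against $u=(\phi,0)^{\top}$ and $u=(0,\psi)^{\top}$ for $\phi,\psi\in C_{c}^{\infty}(D)^{3}$. This yields, in the distributional sense, $-\mathrm{curl}\,\mathbf{H}'=w_{1}\in L^{2}(D)^{3}$ and $\mathrm{curl}\,\mathbf{E}'=w_{2}\in L^{2}(D)^{3}$, so $\mathbf{E}',\mathbf{H}'\in H(\mathrm{curl},D)$ and $Mu'\in V$. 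Re-running the integration-by-parts computation for arbitrary $u\in\mathcal{D}(M)$ and using $\mathbf{n}\times\mathbf{E}=0$ then reduces the adjoint identity to the single boundary condition $\int_{\Gamma}(\mathbf{n}\times\mathbf{H})\cdot\mathbf{E}'\,dS=0$ for every $\mathbf{H}\in H(\mathrm{curl},D)$. Surjectivity of the tangential trace map from $H(\mathrm{curl},D)$ onto its natural trace space on $\Gamma$ then forces $\mathbf{n}\times\mathbf{E}'=0$, i.e., $\mathbf{E}'\in H_{0}(\mathrm{curl},D)$, so $u'\in\mathcal{D}(M)$.

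Having shown $M^{\ast}=-M$, closedness of $M$ is automatic and Stone's theorem produces a unitary $C_{0}$-group $S(t)=e^{tM}$ on $V$, so in particular a unitary $C_{0}$-semigroup for $t\in\mathbb{R}^{+}$. The hard part is the third step: because $D$ has only Lipschitz regularity (with edges and corners along $\Gamma$), the tangential trace cannot be taken pointwise, and the argument must invoke the appropriate trace theory for $H(\mathrm{curl},D)$ on Lipschitz domains rather than on classical boundary values.
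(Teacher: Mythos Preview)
The paper does not prove this lemma at all; it simply cites it from \cite{EN2000}. Your argument is correct and is one of the standard routes to the result. The only point requiring care, as you yourself flag, is the last step: the boundary pairing $\int_{\Gamma}(\mathbf{n}\times\mathbf{H})\cdot\mathbf{E}'\,dS$ must be read as the duality between the tangential trace spaces $H^{-1/2}(\mathrm{div}_{\Gamma},\Gamma)$ and $H^{-1/2}(\mathrm{curl}_{\Gamma},\Gamma)$, and one then invokes surjectivity of the tangential trace $\gamma_t:H(\mathrm{curl},D)\to H^{-1/2}(\mathrm{div}_{\Gamma},\Gamma)$ on Lipschitz domains to conclude $\mathbf{n}\times\mathbf{E}'=0$. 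With that understood, the argument is complete.

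It is worth noting that the paper \emph{does} prove the analogous statement for the split operators $M_x,M_y,M_z$ (Lemma~3.1), and there it takes a different route: it shows skew-symmetry by integration by parts and then establishes that $Id\pm M_\alpha$ has dense range by explicitly solving the resolvent equation via the Lax--Milgram lemma applied to a one-dimensional elliptic problem. That range-density argument is the other classical way to upgrade skew-symmetry to skew-adjointness, and it has the advantage of sidestepping the trace theory entirely; the price is that one must solve the resolvent problem by hand. Either approach would work for the full Maxwell operator $M$, and the paper's choice to cite rather than prove the lemma for $M$ presumably reflects that this result is entirely standard.
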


Similar to the well-posedness and uniform boundedness results from \cite[Propositions 2.1 and 2.2]{CHJ2019b}, we have the following theorem.
\begin{theorem}
Let ${\bf Z}_0$ be an $\mathcal{F}_0$-measurable $V$-valued random variable satisfying $\|{\bf Z}_0\|_{L^2(\Omega;V)}<\infty$, and let $Q$ be a symmetric, positive definite, trace class operator on $U$. Then stochastic Maxwell equations \eqref{abstract form} have a unique mild solution ${\bf Z}\in L^2(\Omega;C([0,T]; V))$  satisfying 
\begin{equation}\label{mild sol}
{\bf Z}(t)=S(t){\bf Z}_0+\int_0^t S(t-s)\lambda{\rm d}W(s),\quad \mathbb{P}\text{-}a.s.,
\end{equation}
for each $t\in[0,T]$.

Furthermore, if for any $k\in \mathbb{N}$, $Q^{\frac{1}{2}}\in HS(U,H^{k}(D))$ and $\|{\bf Z}_0\|_{L^2(\Omega,\mathcal{D}(M^k))}<\infty$, then the mild solution ${\bf Z}\in L^2(\Omega;C([0,T]; \mathcal{D}(M^k)))$ and satisfies 
\begin{equation}\label{eq 2.8}
{\mathbb E}\Big[\sup_{t\in[0,T]}\|{\bf Z}(t)\|^2_{{\mathcal D}(M^k)}\Big]\leq C\Big(1+{\mathbb E}\|{\bf Z}_0\|^2_{{\mathcal D}(M^k)}\Big),
\end{equation}
where the constant $C$ may depend on $T$, $|\lambda|$ and $\|Q^{\frac12}\|_{HS( U, H^{k}(D))}$.  Here $HS(U,H^k(D))$ denotes the family of Hilbert-Schmidt operators from $U$ to $H^k(D)$.
\end{theorem}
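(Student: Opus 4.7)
The proof decomposes into the two assertions of the statement. \textbf{Existence, uniqueness, and mild representation.} The plan is to view \eqref{abstract form} as an abstract stochastic evolution equation on $V$ driven by a $Q$-Wiener process with constant diffusion coefficient $\lambda$. Since $M$ is skew-adjoint (by the preceding lemma) it generates the unitary $C_0$-semigroup $S(t)=e^{tM}$, and the noise coefficient is deterministic with $\|\lambda Q^{1/2}\|_{HS(U,V)}^2\leq |\lambda|^2\,\mathrm{Tr}(Q)<\infty$. The Da Prato--Zabczyk theory for semilinear SPDEs with additive noise then directly yields a unique mild solution in $L^2(\Omega;C([0,T];V))$ given by \eqref{mild sol}; the time continuity comes from the stochastic Fubini theorem together with strong continuity of $S(t)$.

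\textbf{Regularity in $\mathcal{D}(M^k)$.} The strategy is to apply $M^k$ to \eqref{mild sol} and estimate each piece. Because $M$ is closed and commutes with $S(t)$ on its domain, one formally obtains
\begin{equation*}
M^k\mathbf{Z}(t)=S(t)M^k\mathbf{Z}_0+\int_0^t S(t-s)M^k\lambda\,\mathrm{d}W(s),
\end{equation*}
provided $\lambda Q^{1/2}e_j\in\mathcal{D}(M^k)$ for every basis element $e_j$ of $U$. The latter is precisely the role of the hypothesis $Q^{1/2}\in HS(U,H^k(D))$: each $Q^{1/2}e_j$ belongs to $H^k(D)$ and vanishes on $\Gamma$ (by the definition of $U$), so the boundary condition in $\mathcal{D}(M)$ is satisfied and repeated application of $\mathrm{curl}$ stays within $L^2(D)^3$. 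The unitarity of $S(t)$ then reduces the deterministic contribution to $\|M^k\mathbf{Z}_0\|_V$, while the It\^o isometry controls the stochastic part by
\begin{equation*}
\mathbb{E}\Bigl\|\int_0^t S(t-s)M^k\lambda\,\mathrm{d}W(s)\Bigr\|_V^2 =\int_0^t\|M^k\lambda Q^{1/2}\|_{HS(U,V)}^2\,\mathrm{d}s\leq CT|\lambda|^2\|Q^{1/2}\|_{HS(U,H^k(D))}^2.
\end{equation*}

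To promote the pointwise-in-$t$ estimate into one for the supremum inside the expectation, I would use the factorization method of Da Prato--Kwapie\'n--Zabczyk (or, alternatively, the Burkholder--Davis--Gundy inequality applied via the unitarity of $S$), which together with the strong continuity of $S(t)$ yields
\begin{equation*}
\mathbb{E}\Bigl[\sup_{t\in[0,T]}\|\mathbf{Z}(t)\|_{\mathcal{D}(M^k)}^2\Bigr]\leq C\bigl(1+\mathbb{E}\|\mathbf{Z}_0\|_{\mathcal{D}(M^k)}^2\bigr),
\end{equation*}
i.e.\ \eqref{eq 2.8}. Path continuity in $\mathcal{D}(M^k)$ follows by the same estimates applied to the increments $\mathbf{Z}(t)-\mathbf{Z}(s)$.

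The main obstacle is the verification that $\lambda Q^{1/2}e_j\in\mathcal{D}(M^k)$, because this entails tracking both the $H^k$-regularity of the noise and the compatibility of its trace with the tangential boundary condition embedded in \eqref{domain}. Once the hypothesis on $Q^{1/2}$ is combined with $Q^{1/2}e_j|_\Gamma=0$, this reduces to purely functional-analytic bookkeeping, and the remaining estimates are standard semigroup computations parallel to \cite[Propositions 2.1 and 2.2]{CHJ2019b}.
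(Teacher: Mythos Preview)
The paper does not supply its own proof of this theorem; it simply refers to \cite[Propositions~2.1 and~2.2]{CHJ2019b} and records the statement. Your sketch is exactly the standard argument one expects behind that citation: invoke the Da~Prato--Zabczyk well-posedness theory for linear SPDEs with additive noise and a unitary $C_0$-semigroup, then obtain the $\mathcal{D}(M^k)$-bound by commuting $M^k$ through the mild formula and combining It\^o's isometry with a maximal inequality (factorization or BDG) for the supremum. In that sense your approach and the paper's (implicit) approach coincide.

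One caution on the step you label ``purely functional-analytic bookkeeping.'' For $k=1$ the inclusion $\lambda Q^{1/2}e_j\in\mathcal{D}(M)$ is indeed immediate from $Q^{1/2}e_j|_\Gamma=0$. For $k\ge 2$, however, membership in $\mathcal{D}(M^k)$ also requires $\mathbf{n}\times\mathrm{curl}(\lambda_2\phi)=\mathbf{n}\times(\nabla\phi\times\lambda_2)=0$ on $\Gamma$, and vanishing of $\phi=Q^{1/2}e_j$ on $\Gamma$ does not by itself annihilate the normal derivative $\partial_{\mathbf n}\phi$ that appears in this expression. So this compatibility is precisely where the hypothesis on $Q^{1/2}$ has to do real work, and it is less automatic than your phrasing suggests. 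You have correctly located the only nontrivial point in the argument; just be aware that writing it out for $k\ge 2$ is more than clerical, and is presumably what the cited reference actually carries out.
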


The averaged energy  of stochastic Maxwell equations \eqref{sto_max} evolutes linearly with the rate $|\lambda|^2{\rm Tr}(Q)$ (see \cite[Theorem 2.1]{CHZ2016}):
\begin{equation}\label{ave-energy}
{\mathbb E}\|{\bf Z}(t)\|^2_{V}={\mathbb E}\|{\bf Z}_0\|^2_{V}+t|\lambda|^2{\rm Tr}(Q).
\end{equation}

From the proof of  \cite[Theorem 2.2]{CHZ2016}, one gets the divergence evolution laws for the stochastic Maxwell equations. Let $H({\rm div},D)=\{ v\in (L^2(D))^3:~~\nabla\cdot v\in L^2(D)^3  \}.$
\begin{lemma} 
Assume that ${\bf E}_0,{\bf H}_0\in H({\rm div},D)$, $Q^{\frac{1}{2}}\in HS(U,H^{1}(D))$.
The divergence of system \eqref{sto_max} satisfies
\begin{align}\label{div}
\nabla\cdot {\bf E}(t)=\nabla\cdot {\bf E}_0+\lambda_1\cdot \left(\nabla W(t)\right),\quad \nabla\cdot {\bf H}(t)=\nabla\cdot {\bf H}_0+\lambda_2\cdot \left(\nabla W(t)\right).
\end{align}
\end{lemma}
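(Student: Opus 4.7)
The plan is to apply the divergence operator to the integrated form of each equation and exploit the classical identity $\nabla\cdot\mathrm{curl}\equiv 0$, with the regularity assumptions $({\bf E}_0,{\bf H}_0)\in H(\mathrm{div},D)^2$ and $Q^{1/2}\in HS(U,H^1(D))$ serving to give meaning to every object involved. Note in particular that $Q^{1/2}\in HS(U,H^1(D))$ ensures $W(t)\in H^1(D)$ almost surely, so $\nabla W(t)$ exists in $L^2(D)^3$ and the right-hand sides $\nabla\cdot{\bf E}_0+\lambda_1\cdot\nabla W(t)$ and $\nabla\cdot{\bf H}_0+\lambda_2\cdot\nabla W(t)$ are well-defined elements of $L^2(D)$.

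First I would record the integrated $L^2(D)^3$-valued identity
\begin{equation*}
{\bf E}(t)={\bf E}_0+\int_0^t\mathrm{curl}\,{\bf H}(s)\,\mathrm{d}s+\lambda_1 W(t),\qquad \mathbb{P}\text{-a.s.},
\end{equation*}
which follows from the mild solution formula \eqref{mild sol} together with $\mathrm{curl}\,{\bf H}\in L^2(\Omega;C([0,T];L^2(D)^3))$ (a consequence of \eqref{eq 2.8} with $k=1$) and $\lambda_1 W(t)\in L^2(D)^3$. Then I would pair with a test function $\varphi\in C_c^\infty(D)$ written as $\nabla\varphi$: using the vector identity $\nabla\cdot({\bf A}\times{\bf B})={\bf B}\cdot\mathrm{curl}\,{\bf A}-{\bf A}\cdot\mathrm{curl}\,{\bf B}$ with ${\bf A}=\nabla\varphi$ (so $\mathrm{curl}\,{\bf A}=0$), the divergence theorem, and the fact that $\nabla\varphi$ vanishes on $\Gamma$, one obtains
\begin{equation*}
\int_D\mathrm{curl}\,{\bf H}(s)\cdot\nabla\varphi\,\mathrm{d}x=0\qquad\forall\,s\in[0,T],\ \mathbb{P}\text{-a.s.}
\end{equation*}
Consequently the deterministic contribution disappears, and integration by parts on the remaining terms yields
\begin{equation*}
\langle\nabla\cdot{\bf E}(t),\varphi\rangle=\langle\nabla\cdot{\bf E}_0,\varphi\rangle+\langle\lambda_1\cdot\nabla W(t),\varphi\rangle,
\end{equation*}
for every $\varphi\in C_c^\infty(D)$. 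The density of $C_c^\infty(D)$ in $L^2(D)$ then gives the first identity in \eqref{div}.

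The argument for ${\bf H}$ is entirely analogous: replace $({\bf E},\lambda_1,{\bf E}_0)$ by $({\bf H},\lambda_2,{\bf H}_0)$ and use the equation \eqref{sto_max_2} with its opposite sign on the curl; the same computation shows $\int_D\mathrm{curl}\,{\bf E}(s)\cdot\nabla\varphi\,\mathrm{d}x=0$ for $\varphi\in C_c^\infty(D)$, and the second identity in \eqref{div} follows.

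The only subtlety, and the place most care is needed, is the rigorous interchange of $\nabla\cdot$ with the Bochner and stochastic integrals; this is precisely why I formulate everything weakly against $\varphi\in C_c^\infty(D)$, so that the interchange becomes Fubini plus the stochastic Fubini theorem applied to scalar-valued integrands of finite second moment (guaranteed by the assumed $HS$ regularity of $Q^{1/2}$). Once these are in place, the algebraic identity $\mathrm{curl}\,\nabla\varphi=0$ does all of the remaining work, and \eqref{div} follows pointwise in $t$, $\mathbb{P}$-almost surely.
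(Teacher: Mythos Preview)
Your approach is correct in substance and is essentially the standard argument: test the equation against gradients of compactly supported test functions and use $\mathrm{curl}\,\nabla\varphi=0$ to kill the curl terms. The paper does not give its own proof of this lemma---it simply refers to \cite[Theorem~2.2]{CHZ2016}---so there is no detailed comparison to make beyond noting that your argument is the natural one.

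One small technical point: you justify the integrated identity
\[
{\bf E}(t)={\bf E}_0+\int_0^t\mathrm{curl}\,{\bf H}(s)\,\mathrm{d}s+\lambda_1 W(t)
\]
by invoking \eqref{eq 2.8} with $k=1$, but that estimate requires ${\bf Z}_0\in L^2(\Omega;\mathcal{D}(M))$, whereas the lemma only assumes ${\bf E}_0,{\bf H}_0\in H(\mathrm{div},D)$, which does not control the curl. The cleanest fix is to bypass the strong form entirely and use the weak-solution identity directly: for $\psi=(\nabla\varphi,0)^\top$ with $\varphi\in C_c^\infty(D)$ one has $\psi\in\mathcal{D}(M^*)=\mathcal{D}(M)$ and $M^*\psi=0$ (since $\mathrm{curl}\,\nabla\varphi=0$), so the weak formulation immediately gives
\[
\langle{\bf E}(t),\nabla\varphi\rangle=\langle{\bf E}_0,\nabla\varphi\rangle+\langle\lambda_1 W(t),\nabla\varphi\rangle,
\]
from which your conclusion follows by integration by parts. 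This is exactly the computation you perform; only the preamble about the strong integrated form is superfluous under the stated hypotheses. In the applications (Propositions~\ref{H1 regularity} and~\ref{pro_2.2}) the extra regularity is available anyway, so the distinction is harmless in context.
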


%%%%%%%%%%%
\subsection{$H^k$-regularity results}
In this part, we prove the regularities of the mild solution \eqref{mild sol} in $L^2(\Omega; H^k(D)^6)$-norm ($H^k$-regularity, for short) with $k=1,2$, which are essential in the mean square convergence analysis.

\begin{proposition}\label{H1 regularity}
Assume that $Q^{\frac12}\in {HS( U, H_0^1(D))}$ and ${\bf Z}_0\in L^2(\Omega; H^1(D)^6)$. Then the solution  \eqref{mild sol} has the $H^1$-regularity, i.e.,
\[
{\mathbb E}\|{\bf Z}(t)\|_{H^1(D)^6}^2\leq C\left(1+{\mathbb E}\|{\bf Z}_0\|_{H^1(D)^6}^2\right),
\]
where  the constant $C$ may depend on $T$,  $|\lambda|$  and $\|Q^{\frac12}\|_{HS( U, H^1(D))}$. 
\end{proposition}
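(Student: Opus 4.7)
The plan is to decompose the squared $H^1$-norm of $({\bf E}(t),{\bf H}(t))$ into three pieces that are already controlled by results in the paper. The key ingredient is the Friedrichs-type inequality on the cuboid $D$: for any vector field $v$ satisfying either ${\bf n}\times v|_\Gamma=0$ or ${\bf n}\cdot v|_\Gamma=0$,
\[
\|v\|_{H^1(D)^3}^2\le C\bigl(\|v\|_{L^2(D)^3}^2+\|{\rm curl}\,v\|_{L^2(D)^3}^2+\|\nabla\cdot v\|_{L^2(D)^3}^2\bigr).
\]
This applies to ${\bf E}$ via the PEC boundary \eqref{bc:E}, and to ${\bf H}$ via the normal-trace identity \eqref{bc:H} derived in the Remark (under the natural compatibility ${\bf n}\cdot{\bf H}_0=0$). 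On a cuboid, the inequality reduces component-wise to mixed Dirichlet/Neumann problems for the Laplacian, which is exactly the content of [ES2017, Lemma 3.1] cited in the introduction.

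With this embedding available, the proof splits into three expectation estimates. First, the $L^2$ part is supplied directly by the averaged-energy identity \eqref{ave-energy}. Second, since ${\rm curl}$ is a first-order operator, a PEC-compatible datum ${\bf Z}_0\in H^1(D)^6$ lies in $\mathcal{D}(M)$ with $\|{\bf Z}_0\|_{\mathcal{D}(M)}\le C\|{\bf Z}_0\|_{H^1(D)^6}$, and the semigroup estimate \eqref{eq 2.8} with $k=1$ then gives ${\mathbb E}\|M{\bf Z}(t)\|_V^2\le C(1+{\mathbb E}\|{\bf Z}_0\|_{H^1(D)^6}^2)$, which controls both $\|{\rm curl}\,{\bf E}\|_{L^2}^2$ and $\|{\rm curl}\,{\bf H}\|_{L^2}^2$. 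Third, the divergence evolution law \eqref{div} combined with $Q^{\frac12}\in HS(U,H_0^1(D))$ yields
\[
{\mathbb E}\|\nabla\cdot{\bf E}(t)\|_{L^2(D)}^2\le 2\,{\mathbb E}\|\nabla\cdot{\bf E}_0\|_{L^2(D)}^2+2|\lambda_1|^2\,t\,\|Q^{\frac12}\|^2_{HS(U,H^1(D))},
\]
and the analogous bound for $\nabla\cdot{\bf H}$. Summing the three estimates across ${\bf E}$ and ${\bf H}$ yields the claimed inequality, with constant depending on $T$, $|\lambda|$, and $\|Q^{\frac12}\|_{HS(U,H^1(D))}$.

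The main obstacle is establishing the Friedrichs-type embedding itself: even on the convex domain $D$, the two admissible boundary conditions (tangential trace zero for ${\bf E}$ versus normal trace zero for ${\bf H}$) lead to genuinely different component-wise problems, and a direct proof would require careful handling of the edges and corners of the cuboid. Delegating this step to [ES2017, Lemma 3.1] is what makes the rest of the argument straightforward: once the inequality is in hand, assembling the averaged-energy identity, the $\mathcal{D}(M)$-regularity estimate \eqref{eq 2.8}, and the divergence evolution \eqref{div} is essentially bookkeeping, and the hypothesis $Q^{\frac12}\in HS(U,H_0^1(D))$ enters only through the finiteness of the Hilbert--Schmidt norm in $H^1(D)$.
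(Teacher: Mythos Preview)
Your approach is essentially identical to the paper's: both invoke the classical embedding $H({\rm curl},D)\cap H({\rm div},D)\hookrightarrow H^1(D)^3$ under either tangential or normal boundary condition (the paper's inequality \eqref{eq 2.6}), control the curl piece via the $\mathcal{D}(M)$-bound \eqref{eq 2.8} with $k=1$, and control the divergence via \eqref{div}. One small correction: the Friedrichs-type inequality you rely on is a classical result that the paper simply states outright; the reference [ES2017, Lemma~3.1] (Lemma~\ref{mixed} here) is an $H^2$-regularity statement for the Laplacian with mixed boundary data and is used only later, in the proof of Proposition~\ref{pro_2.2}, not for the $H^1$ step.
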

\begin{proof}
The $H^1$-regularity is deduced by utilizing the fact that  $v\in H({\rm curl},D)\cap H({\rm div},D)$ belongs to $H^1(D)^3$ if $v\times {\bf n}=0$ or $v\cdot{\bf n}=0$  on $\Gamma$. Moreover, the $H^1$-norm of $v$ is dominated by
 \begin{equation}\label{eq 2.6}
\|v\|_{H^1(D)^3}\leq C\left( \|v\|_{L^2(D)^3}+\|{\rm curl}\; v\|_{L^2(D)^3}
+\|\nabla\cdot v\|_{L^2(D)}\right).
\end{equation}

Meanwhile, from  \eqref{div}, it can be verified that there exists a constant
 $C$ depending on $T,|\lambda_1|,\|Q^{\frac12}\|_{HS( U, H^1(D))}$ such that
\begin{equation}\label{eq 2.10}
{\mathbb E}\|\nabla\cdot {\bf E}(t)\|_{L^2(D)}^2\leq 2 {\mathbb E}\|\nabla\cdot {\bf E}_0\|_{L^2(D)}^2+2{\mathbb E}\|\lambda_1\cdot \left(\nabla W(t)\right)\|_{L^2(D)}^2\leq C(1+{\mathbb E}\|\nabla\cdot {\bf E}_0\|_{L^2(D)}^2).
\end{equation}
Similarly, it holds
\begin{equation}\label{100}
{\mathbb E}\|\nabla\cdot {\bf H}(t)\|_{L^2(D)}^2\leq C(1+{\mathbb E}\|\nabla\cdot {\bf H}_0\|_{L^2(D)}^2)
\end{equation}
with $C$ depending on $T$, $|\lambda_2|$ and $\|Q^{\frac12}\|_{HS( U, H^1(D))}$.

Since ${\bf n}\times{\bf E}=0$ and ${\bf n}\cdot {\bf H}=0$ on $[0,T]\times\Gamma$, from \eqref{eq 2.6}, we obtain
\begin{equation*}
\begin{split}
{\mathbb E}\|{\bf Z}(t)\|^2_{H^1(D)^6}\leq &C\bigg(
{\mathbb E}\|{\bf E}(t)\|^2_{L^2(D)^3}+{\mathbb E}\|{\bf H}(t)\|^2_{L^2(D)^3}
+{\mathbb E}\|{\rm curl}\;{\bf E}(t)\|^2_{L^2(D)^3}\\
&+{\mathbb E}\|{\rm curl}\;{\bf H}(t)\|^2_{L^2(D)^3}+{\mathbb E}\|\nabla\cdot{\bf E}(t)\|^2_{L^2(D)}+{\mathbb E}\|\nabla\cdot{\bf H}(t)\|^2_{L^2(D)}
\bigg)\\
\leq &C\bigg({\mathbb E}\|{\bf Z}(t)\|^2_{{\mathcal D}(M)} +{\mathbb E}\|\nabla\cdot{\bf E}(t)\|^2_{L^2(D)}+{\mathbb E}\|\nabla\cdot{\bf H}(t)\|^2_{L^2(D)}\bigg)\\
\leq &C(1+{\mathbb E}\|{\bf Z}_0\|_{H^1(D)^6}^2),
\end{split}
\end{equation*}
due to  \eqref{eq 2.8}, \eqref{eq 2.10} and \eqref{100}.
The proof is completed.
\end{proof}

In our error analysis we still need the $H^2$-regularity of the solution \eqref{mild sol}, whose proof relies on the following lemma about the mixed inhomogeneous boundary value problems for the Laplacian on a cuboid; see \cite[Lemma 3.1]{ES2017} for the detailed proof. 
\begin{lemma}{\rm \cite[Lemma 3.1]{ES2017}}\label{mixed}
Let $j\in\{1,2,3\}$ and $\Gamma^*=\Gamma \backslash \Gamma_j$. Take $f\in L^2(D)$ and $g\in H_0^{1/2}(\Gamma_j):=\big(L^2(\Gamma_j),~H_0^1(\Gamma_j)\big)_{1/2,2}$.
If there is a unique function $v\in H_{\Gamma^*}^{1}(D)$ solving
\begin{equation}
\int_D v\varphi {\rm d}{\bf x} +\int_D \nabla v\cdot \nabla\varphi {\rm d}{\bf x}
=\int_{D} f\varphi{\rm d}{\bf x}+\int_{\Gamma_j^{+}}g\varphi {\rm d}\sigma -\int_{\Gamma_j^-}g\varphi {\rm d}\sigma,
\end{equation}
for all $\varphi\in H_{\Gamma^*}^1(D)$, then the solution $v$ belongs to $H^2(D)\cap H_{\Gamma^*}^1(D)$ and satisfies $v-\Delta v=f$ on $D$, $\partial_{\bf n}v=g$ on $\Gamma_j$, and $\|v\|_{H^2(D)}\leq C\big( \|f\|_{L^2(D)}+\|g\|_{H_0^{1/2}(\Gamma_j)} \big)$ with the constant $C$ only depending on $D$.
\end{lemma}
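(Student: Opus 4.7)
The plan is to proceed in three stages: construct a weak solution in $H^1_{\Gamma^*}(D)$ via Lax--Milgram, extract the strong form and the Neumann boundary condition on $\Gamma_j$, and finally upgrade to $H^2$-regularity using the cuboidal geometry through a reflection argument.

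For the first stage, the bilinear form $a(u,\varphi)=\int_D u\varphi\,\mathrm{d}\mathbf{x}+\int_D \nabla u\cdot\nabla\varphi\,\mathrm{d}\mathbf{x}$ is continuous and coercive (with constant $1$) on the Hilbert space $H^1_{\Gamma^*}(D)$, while the right-hand side defines a continuous linear functional thanks to Cauchy--Schwarz for the volume term and the trace theorem combined with the duality $H^{-1/2}(\Gamma_j)\leftrightarrow H^{1/2}_0(\Gamma_j)$ for the surface terms. Lax--Milgram then yields a unique $v\in H^1_{\Gamma^*}(D)$ with a first bound $\|v\|_{H^1(D)}\le C(\|f\|_{L^2(D)}+\|g\|_{H^{1/2}_0(\Gamma_j)})$. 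Testing against $\varphi\in C_c^\infty(D)$ and integrating by parts identifies $v-\Delta v=f$ in $\mathcal{D}'(D)$, so $\Delta v=v-f\in L^2(D)$; testing next against $\varphi$ smooth up to $\Gamma_j^\pm$ and vanishing near $\Gamma^*$ and applying Green's formula gives $\partial_{\mathbf{n}} v=g$ on $\Gamma_j$ in the $H^{-1/2}(\Gamma_j)$ sense, which will be promoted to a genuine trace equality once $H^2$-regularity is in hand.

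The heart of the proof, and the principal obstacle, is the $H^2$-regularity. The domain $D$ is a polyhedron and the boundary conditions change type across the edges where $\Gamma_j$ meets $\Gamma^*$, so classical smooth-boundary elliptic regularity does not apply, and on a generic Lipschitz polyhedron one would expect corner/edge singularities that destroy $H^2$ control. The strategy is to exploit the fact that the faces of the cuboid meet at right angles and that the Dirichlet/Neumann partition is aligned with the coordinate directions. I would first reduce to $g=0$ by subtracting an explicit $H^2$-lift of $g$ built from its tangential Fourier expansion on $\Gamma_j$; such a lift is compatible with the zero Dirichlet condition on $\Gamma^*$ precisely because the hypothesis $g\in H^{1/2}_0(\Gamma_j)$ encodes the vanishing of $g$ on $\partial\Gamma_j$. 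With $g=0$, I would extend $v$ by even reflection across $\Gamma_j$ (consistent with homogeneous Neumann data) and by odd reflection across each Dirichlet face in $\Gamma^*$; after reflecting all six faces and tiling by translation, this produces a periodic extension $\tilde v$ on $\mathbb{R}^3$ satisfying $\tilde v-\Delta \tilde v=\tilde f$ with $\tilde f\in L^2_{\mathrm{loc}}$. Standard constant-coefficient elliptic regularity then gives $\tilde v\in H^2_{\mathrm{loc}}(\mathbb{R}^3)$ with quantitative control, and restricting back to $D$ yields $v\in H^2(D)$ with the desired bound.

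An equivalent and particularly transparent route specific to the cuboid is to expand directly in the orthonormal basis of $L^2(D)$ formed by products of cosines in the $x_j$-direction and sines in the two tangential directions; this basis simultaneously diagonalises $-\Delta+\mathrm{Id}$ and is matched to the mixed boundary conditions, and the estimate $\|v\|_{H^2(D)}\le C(\|f\|_{L^2(D)}+\|g\|_{H^{1/2}_0(\Gamma_j)})$ drops out of Parseval after expressing the Fourier coefficients of $v$ algebraically in terms of those of $f$ and of $g$ (expanded in the tangential sine basis on $\Gamma_j$). In either route, the delicate technical point — and the reason $H^{1/2}_0$ rather than $H^{1/2}$ appears in the hypothesis on $g$ — is that this precise vanishing of $g$ at the Dirichlet--Neumann interface is exactly what makes the reflection or lifting procedure preserve $H^1$-regularity across the edges.
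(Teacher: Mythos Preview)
The paper does not prove this lemma at all; it is quoted verbatim from \cite[Lemma~3.1]{ES2017} and the text explicitly directs the reader there for the detailed proof. So there is no in-paper argument to compare against.

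That said, your outline is a sound and standard route to the result. A few comments. First, the lemma as stated \emph{assumes} the existence of a unique weak solution $v\in H^1_{\Gamma^*}(D)$, so the Lax--Milgram stage is strictly speaking not required; it does no harm, but the actual task is only the regularity upgrade and the identification of the strong form. Second, both of your proposed mechanisms for the $H^2$-estimate---even/odd reflection across the Neumann/Dirichlet faces followed by periodic elliptic regularity, and the equivalent separation-of-variables expansion in the mixed cosine/sine basis adapted to the cuboid---are exactly the techniques one expects for this geometry, and your identification of the role of $H^{1/2}_0(\Gamma_j)$ (vanishing of $g$ on $\partial\Gamma_j$ as the compatibility needed at the Dirichlet--Neumann edges) is the crucial point. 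The step that deserves the most care if you write this out fully is the construction of the $H^2$-lift of $g$ compatible with the homogeneous Dirichlet condition on $\Gamma^*$; this is where the interpolation-space definition of $H^{1/2}_0(\Gamma_j)$ is genuinely used, and a sloppy lift can fail to land in $H^1_{\Gamma^*}(D)$.
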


Following the approach in \cite{ES2017,HJS2015},
we introduce a subspace $H_{00}^1(D)$ of $H^1(D)$ as
\[
H_{00}^1(D)=\left\{f\in H^1(D)~|~ {\rm tr}_{\Gamma'}f\in H_0^{1/2}(\Gamma'),~~\mbox{for all faces }\Gamma'\mbox{ of }D \right\},
\]
and achieve higher regularity of the solution \eqref{mild sol}.
\begin{proposition}\label{pro_2.2}
Assume that $Q^{\frac12}\in {HS\left( U, H^2(D)\cap H_0^1(D)\right)}$, $\nabla Q^{\frac12}: ~U\to H_{00}^1(D)^3$, and ${\bf Z}_0\in L^2\left(\Omega; H^2(D)^6\right)$, $\nabla\cdot {\bf E}_0\in L^2(\Omega; H_{00}^1(D))$. Then the solution \eqref{mild sol} has  the $H^2$-regularity, i.e.,
\[
{\mathbb E}\|{\bf Z}(t)\|_{H^2(D)^6}^2\leq C(1+{\mathbb E}\|{\bf Z}_0\|_{H^2(D)^6}^2).
\]
For any $t\in[0,T]$, the field $({\bf E}(t),{\bf H}(t))$ has the traces
\begin{equation*}
\begin{split}
E_j=E_k=0&,\quad \partial_j E_j=\partial_k E_j=\partial_j E_k=\partial_k E_k=0 ,\quad \mbox{on } \Gamma_i,\\[1mm]
H_i=0&,~~\qquad\qquad\qquad\quad  \partial_jH_i=\partial_k H_i=0,\quad \mbox{on } \Gamma_i,
\end{split}
\end{equation*}
for all permutations $(i,j,k)$ of $(1,2,3)$.
\end{proposition}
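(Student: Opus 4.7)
The plan is to reduce the vector-valued $H^2$ estimate to six scalar mixed boundary-value problems on the cuboid and then invoke (an appropriate extension of) Lemma \ref{mixed}. The bridge is the identity $-\Delta v={\rm curl}\,{\rm curl}\,v-\nabla(\nabla\cdot v)$ applied component-wise, together with the divergence evolution law \eqref{div}. Before using it I would pin down all the trace identities claimed in the proposition: the PEC condition ${\bf n}\times{\bf E}=0$ restricted to $\Gamma_i$, where ${\bf n}=\pm e_i$, gives $E_j|_{\Gamma_i}=E_k|_{\Gamma_i}=0$ for a permutation $(i,j,k)$ of $(1,2,3)$, and tangential differentiation along $\Gamma_i$ (whose tangents are $\partial_j,\partial_k$) kills the four $E$-derivatives displayed in the statement; the Remark already supplies ${\bf n}\cdot{\bf H}=0$, hence $H_i|_{\Gamma_i}=0$ and $\partial_jH_i=\partial_kH_i=0$ on $\Gamma_i$. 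Two auxiliary boundary identities that I will need are ${\bf n}\times{\rm curl}\,{\bf H}=0$ and ${\bf n}\cdot{\rm curl}\,{\bf E}=0$ on $\Gamma$; both follow by applying ${\bf n}\times$ (respectively ${\bf n}\cdot$) to \eqref{sto_max_1} and \eqref{sto_max_2} and exploiting $Q^{1/2}h|_{\Gamma}=0$ for every $h\in U$, exactly as in the Remark. On $\Gamma_i$ the first identity unpacks to the homogeneous Neumann conditions $\partial_iH_j=\partial_iH_k=0$, while on $\Gamma_j$ the divergence formula combined with the Dirichlet traces of $E_k$ ($k\neq j$) yields $\partial_jE_j=\nabla\cdot{\bf E}$, which will serve as the Neumann data for $E_j$.

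Next I would fix $j$ and set up the mixed BVP for $E_j$: Dirichlet on $\Gamma^{*}=\Gamma\setminus\Gamma_j$ (four faces) and Neumann on $\Gamma_j$ (two faces) with data $g=\nabla\cdot{\bf E}|_{\Gamma_j}$. From \eqref{div} together with the hypotheses $\nabla\cdot{\bf E}_0\in L^2(\Omega;H_{00}^1(D))$ and $\nabla Q^{\frac12}:U\to H_{00}^1(D)^3$, the random function $\nabla\cdot{\bf E}(t)$ lies in $L^2(\Omega;H_{00}^1(D))$, so its trace on $\Gamma_j$ sits in $L^2(\Omega;H_0^{1/2}(\Gamma_j))$ as required by Lemma \ref{mixed}. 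Integration by parts in the test-function identity for $\varphi\in H_{\Gamma^{*}}^1(D)$ verifies the weak formulation of Lemma \ref{mixed} with load $f_j=E_j+({\rm curl}\,{\rm curl}\,{\bf E})_j-\partial_j(\nabla\cdot{\bf E})$. The three summands are controlled in $L^2(\Omega\times D)$ by Proposition \ref{H1 regularity}, by the bound \eqref{eq 2.8} with $k=2$ (applicable under the compatibility built into the PEC setting), and by the $H^1$-regularity of $\nabla\cdot{\bf E}(t)$, respectively. Lemma \ref{mixed} then yields $\|E_j\|_{H^2(D)}\le C(\|f_j\|_{L^2(D)}+\|g\|_{H_0^{1/2}(\Gamma_j)})$ pathwise.

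The component $H_i$ has the \emph{dual} boundary geometry: Dirichlet on the two faces $\Gamma_i$ and homogeneous Neumann $\partial_kH_i=0$ on each of the four faces $\Gamma_k$ with $k\neq i$. Because the Neumann data is identically zero, I would invoke a variant of Lemma \ref{mixed} adapted to several Neumann faces on a cuboid, provable by the same Lax--Milgram/elliptic-regularity argument as in \cite{ES2017}, or alternatively by successive reflection across the Neumann faces to convert the problem into a pure Dirichlet-in-direction-$i$ problem on an enlarged cuboid, or by a cosine/sine Fourier expansion separating the three coordinates. The load $f_i=H_i+({\rm curl}\,{\rm curl}\,{\bf H})_i-\partial_i(\nabla\cdot{\bf H})$ is again in $L^2(\Omega;L^2(D))$, observing that $\nabla\cdot{\bf H}\in H^1$ follows from ${\bf Z}_0\in H^2(D)^6$ and the $H^1$-regularity of $\nabla Q^{\frac12}$. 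Summing the six scalar $H^2$ bounds, taking expectations, and invoking \eqref{eq 2.8} produces the desired inequality. The main obstacle I anticipate is verifying that the trace of $\nabla\cdot{\bf E}(t)$ on $\Gamma_j$ really belongs to the delicate space $H_0^{1/2}(\Gamma_j)$ rather than merely $H^{1/2}(\Gamma_j)$, which is exactly what forces the $H_{00}^1$ hypotheses on $\nabla\cdot{\bf E}_0$ and on $\nabla Q^{\frac12}$; the bookkeeping of distinct Dirichlet/Neumann assignments across the six scalar components is a secondary but non-trivial concern.
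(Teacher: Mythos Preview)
Your proposal is correct and matches the paper's approach: split into six scalar mixed Dirichlet--Neumann problems via $-\Delta={\rm curl}\,{\rm curl}-\nabla\,{\rm div}$, with inhomogeneous Neumann data $\nabla\cdot{\bf E}|_{\Gamma_j}\in H_0^{1/2}(\Gamma_j)$ for $E_j$ and the homogeneous dual configuration for $H_i$, then apply Lemma~\ref{mixed} (the paper cites \cite[Lemma~3.6]{HJS2015} for the $H$-variant rather than re-deriving it). The only refinement worth noting is that the paper sidesteps the mild circularity in asserting $\partial_kH_i|_{\Gamma_k}=0$ before $H^2$ is known by first establishing interior $H^2_{\rm loc}$ regularity and then verifying the weak formulation through integration by parts on shrunken cuboids $D_\kappa\Subset D$ with $\kappa\to0$, so that the boundary terms are handled using only $H^2_{\rm loc}$, the zero-order traces, and ${\bf n}\times{\rm curl}\,{\bf H}|_\Gamma=0$ coming from ${\bf Z}\in\mathcal{D}(M^2)$.
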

\begin{proof}
{\it Step 1.} The $H^1$-regularity follows from Proposition \ref{H1 regularity}. Moreover, the asserted zero-order traces for ${\bf E}$ and ${\bf H}$ are a direct consequence of the boundary conditions \eqref{bc:E} and \eqref{bc:H}, respectively. The first-order traces result from the established zero-order traces and the following $H^2$-regularity.

From \eqref{div}, one gets
\[
{\mathbb E}\|\nabla\cdot{\bf E}(t)\|_{H^1(D)}^2+{\mathbb E}\|\nabla\cdot{\bf H}(t)\|_{H^1(D)}^2
\leq C(1+\|Z_0\|^2_{H^2(D)^6}),
\]
where the constant $C$ may depend on $T$, $|\lambda|$ and $\|Q^{\frac12}\|_{HS( U, H^2(D))}$. Moreover from assumptions of the initial data and noise, we have that for any $t\in[0,T]$, $\nabla\cdot{\bf E}(t), ~\nabla\cdot{\bf H}(t)\in L^2(\Omega; H_{00}^1(D))$.

We note that
$$
M^2{\bf Z}=\begin{pmatrix}
-{\rm curl}\left({\rm curl}\;{\bf E}\right)\\[1mm]
-{\rm curl}\left({\rm curl}\;{\bf H}\right)
\end{pmatrix},
$$
and $\Delta {\bf E}=\nabla(\nabla\cdot{\bf E})-{\rm curl}\left({\rm curl}\;{\bf E}\right)$. It follows from \eqref{eq 2.8} with $k=2$ that $\Delta {\bf E}\in L^2(\Omega; L^2(D)^3)$, and
\[
{\mathbb E}\|\Delta {\bf E}(t)\|^2_{L^2(D)^3}\leq C({\mathbb E}\|\nabla\cdot{\bf E}(t)\|_{H^1(D)}^2+{\mathbb E}\|{\bf Z}(t)\|^2_{{\mathcal D}(M^2)}).
\]
The field ${\bf H}$ can be estimated similarly. Standard interior elliptic regularity then leads to ${\bf E}(t), ~{\bf H}(t)\in L^2(\Omega; H^2_{\rm loc}(D)^3)$.

{\it Step 2.} We first consider the $H^2$-regularity of the first component $E_1$ of ${\bf E}$. Set $\Gamma^*=\Gamma_2\cup\Gamma_3$. From {\em Step 1} it implies that $f:=(I-\Delta)E_1\in L^2(D)$. By employing cut-off and mollification in $y,z$ directions,  one can approximate a given $\varphi\in H_{\Gamma^*}^1(D)$  in $H^1(D)$  by a smooth $\psi$ having support in 
$[a_1^-,a_1^+]\times [a_2^-+\eta, a_2^+-\eta]\times[a_3^-+\eta,a_3^+-\eta]$ for some small $\eta:=\eta(\psi)>0$.
For each $\kappa\in(0,\eta)$, define $D_{\kappa}:=(a_1^-+\kappa,a_1^+-\kappa)\times (a_2^-+\kappa, a_2^+-\kappa)\times(a_3^-+\kappa,a_3^+-\kappa)$  and denote by $\Gamma_1^{\pm}(\kappa)$ those open faces of $D_{\kappa}$ that contain the points of the form $(a_1^{\mp}\pm\kappa,y,z)$. Integration by parts and the support of $\psi$ yield that
\begin{equation}
\begin{split}
\int_D  & E_1\psi{\rm d}{\bf x} +\int_{D} \nabla E_1\cdot \nabla\psi{\rm d}{\bf x}
=\lim_{\kappa\to 0}\int_{D_{\kappa}} E_1\psi{\rm d}{\bf x} +\int_{D_{\kappa}} \nabla E_1\cdot \nabla\psi{\rm d}{\bf x}\\
&=\lim_{\kappa\to 0}\left[\int_{D_{\kappa}} \psi (I-\Delta)E_1{\rm d}{\bf x}
+ \int_{\partial D_{\kappa}} \psi \nabla E_1\cdot{\bf n}{\rm d}\sigma \right]\\
&=\int_{D} \psi f{\rm d}{\bf x}
\pm \lim_{\kappa\to 0}\int_{\Gamma_1^{\pm}(\kappa)} \psi \partial_x E_1 {\rm d}\sigma \\
&=\int_{D} \psi f{\rm d}{\bf x}
\pm \lim_{\kappa\to 0}\int_{\Gamma_1^{\pm}(\kappa)} \psi \big( \nabla\cdot {\bf E}-\partial_y E_2-\partial_z E_3\big) {\rm d}\sigma \\
&=\int_{D} \psi f{\rm d}{\bf x} +\int_{\Gamma_1^+} \psi \nabla\cdot {\bf E} {\rm d}\sigma -\int_{\Gamma_1^-}\psi \nabla\cdot {\bf E} {\rm d}\sigma,
\end{split}
\end{equation}
where the last step is due to integration by parts once more and the fact that $\psi$ vanishes on the boundary of $\Gamma_1^{\pm}(\kappa)$, as well as $E_2,~ E_3$ on $\Gamma_1$.
Lemma \ref{mixed} and $\nabla\cdot{\bf E}\in L^2(\Omega; H_{00}^1(D))$  (see {\em Step 1}) lead to  $E_1\in L^2(\Omega;H^2(D))$. In the same manner, one observes that $E_2,E_3\in L^2(\Omega;H^2(D))$. Furthermore
\begin{equation*}
\begin{split}
{\mathbb E}\|E_j\|_{H^2(D)}^2 &\leq C\left({\mathbb E}\|E_j\|^2_{L^2(D)}+{\mathbb E}\|\Delta E_j\|^2_{L^2(D)} +{\mathbb E}\|\nabla\cdot{\bf E}\|^2_{H_0^{1/2}(\Gamma_j)} \right)\\
&\leq C\left({\mathbb E}\|\nabla\cdot{\bf E}\|_{H^1(D)}^2+{\mathbb E}\|{\bf Z}\|^2_{{\mathcal D}(M^2)}+{\mathbb E}\|\nabla\cdot{\bf E}\|^2_{H_0^{1/2}(\Gamma_j)} \right).
\end{split}
\end{equation*}

{\it Step 3.} Now we consider $H_1$, and set $\Gamma^*=\Gamma_1$ and $\widetilde{f}:=(I-\Delta)H_1\in L^2(D)$. Here we use the homogeneous version of Lemma \ref{mixed} (see also \cite[Lemma 3.6]{HJS2015}). As in {\it Step 2}, we take a smooth function $\psi\in H^1(D)$ having support in $[a_1^-+\eta,a_1^+-\eta]\times [a_2^-, a_2^+]\times[a_3^-,a_3^+]$ for some small $\eta:=\eta(\psi)>0$. Choose $\kappa\in(0,\eta)$ so that $\psi$ vanishes around $\Gamma_1^{\pm}(\kappa)$, then
\begin{equation}
\begin{split}
\int_D  & H_1\psi{\rm d}{\bf x} +\int_{D} \nabla H_1\cdot \nabla\psi{\rm d}{\bf x}
=\lim_{\kappa\to 0}\left[\int_{D_{\kappa}} H_1\psi{\rm d}{\bf x} +\int_{D_{\kappa}} \nabla H_1\cdot \nabla\psi{\rm d}{\bf x}\right]\\
&=\lim_{\kappa\to 0}\left[\int_{D_{\kappa}} \psi (I-\Delta)H_1{\rm d}{\bf x}
+ \int_{\partial D_{\kappa}} \psi \nabla H_1\cdot{\bf n}{\rm d}\sigma \right]\\
&=\int_{D} \psi \widetilde{f}{\rm d}{\bf x}
+ \lim_{\kappa\to 0}\int_{\partial D_{\kappa}} \Big[\psi \nabla H_1\cdot{\bf n}-\big(({\rm curl}\;{\bf H})\times {\bf n}\big)\cdot(\psi,0,0)\Big]{\rm d}\sigma \\
&=\int_{D} \psi \widetilde{f}{\rm d}{\bf x}
+ \lim_{\kappa\to 0}\int_{\partial D_{\kappa}} \psi \partial_x {\bf H}\cdot {\bf n}{\rm d}\sigma\\
&=\int_{D} \psi \widetilde{f}{\rm d}{\bf x}
\pm \lim_{\kappa\to 0}\left[\int_{\Gamma_2^{\pm}(\kappa)} \psi \partial_x H_2{\rm d}\sigma+\int_{\Gamma_3^{\pm}(\kappa)} \psi \partial_x H_3{\rm d}\sigma\right]\\
&=\int_{D} \psi \widetilde{f}{\rm d}{\bf x},
\end{split}
\end{equation}
where in the last step, we use again integration by parts and the fact that $H_2,~ H_3$ vanishes on $\Gamma_2$, $\Gamma_3$, respectively.
Hence
\[
{\mathbb E}\|H_1\|_{H^2(D)}^2\leq C\big( {\mathbb E}\|H_1\|_{L^2(D)}^2+{\mathbb E}\|\Delta H_1\|_{L^2(D)}^2 \big)\leq C\big( {\mathbb E}\|\nabla\cdot {\bf H}\|^2_{H^1(D)}+ {\mathbb E}\| {\bf Z}\|^2_{{\mathcal D}(M^2)}\big).
\]
Components $H_2,~H_3$ can be treated similarly.

Combining {\it Steps 1-3} and \eqref{eq 2.8}, we have
\begin{equation*}
\begin{split}
{\mathbb E}\|{\bf Z}(t)\|^2_{H^2(D)^6}&\leq C\Big({\mathbb E}\|{\bf Z}(t)\|^2_{{\mathcal D}(M^2)}+{\mathbb E}\|\nabla\cdot{\bf E}(t)\|_{H^1(D)}^2+{\mathbb E}\|\nabla\cdot {\bf H}(t)\|^2_{H^1(D)}\\
&\quad+\sum_{\Gamma'\in\Gamma}{\mathbb E}\|\nabla\cdot{\bf E}(t)\|^2_{H_0^{1/2}(\Gamma')} \Big)\\
&\leq C(1+{\mathbb E}\|{\bf Z}_0\|_{H^2(D)^6}^2).
\end{split}
\end{equation*}
Thus the proof is finished.
\end{proof}

%%%%%%%%%%%%%%%%%
\section{Operator splitting method}\label{section3}
In this section, we introduce the dimension splitting of the Maxwell operator. By employing a corresponding decomposition of $\lambda_i$ $(i=1,2)$, we get three subsystems, which enjoy the superiority of the decomposability of the original multi-dimensional problem into some local one-dimensional subproblems and the feature of the separation of the deterministic and stochastic parts. Moreover, each subsystem is a stochastic Hamiltonian system. 
 
\subsection{Operator splitting}\label{subsection_3.1}
Let us now describe how to split three-dimensional stochastic Maxwell equations \eqref{sto_max} into some one-dimensional subsystems. We split the `${\rm curl}$' operator into
\begin{equation}
  {\rm curl}={\rm curl}_x+{\rm curl}_y+{\rm curl}_z,
\end{equation}
where 
\begin{equation*}
{\rm curl}_x=\begin{bmatrix}
0 &0 & 0\\
0 & 0 & -\partial_x\\
0 & \partial_x & 0
\end{bmatrix},\quad{\rm curl}_y=\begin{bmatrix}
0 &0 &  \partial_y\\
0 & 0 & 0\\
-\partial_y & 0 & 0
\end{bmatrix},\quad{\rm curl}_z=\begin{bmatrix}
0 & -\partial_z &0\\
\partial_z & 0 &0\\
0 &0 & 0
\end{bmatrix},
\end{equation*}
are one-dimensional diffenrential operators (see e.g. \cite{CLL2010,KHZ2010} for the case of  deterministic Maxwell equations).
Define operators
\begin{align}\label{operators}
M_{x}=\begin{bmatrix}0 & {\rm curl}_x \\ -{\rm curl}_x & 0 \end{bmatrix},
\quad
M_{y}=\begin{bmatrix}0 & {\rm curl}_y \\ -{\rm curl}_y & 0 \end{bmatrix},
\quad
M_{z}=\begin{bmatrix}0 & {\rm curl}_z \\ -{\rm curl}_z & 0 \end{bmatrix},
\end{align}
on $V$ endowed with domains
\begin{align*}
&{\mathcal D}(M_x)=\big\{u\in V:~~M_x u\in V,~~ u_2=u_3=0~~\mbox{on}~~\Gamma_1 \big\},\\
&{\mathcal D}(M_y)=\big\{u\in V:~~M_y u\in V,~~ u_1=u_3=0~~\mbox{on}~~\Gamma_2 \big\},\\
&{\mathcal D}(M_z)=\big\{u\in V:~~M_z u\in V,~~ u_1=u_2=0~~\mbox{on}~~\Gamma_3 \big\},
\end{align*}
respectively.
Note that $$Mu=M_x u+M_y u+M_z u$$ for $u\in {\mathcal D}(M_x)\cap {\mathcal D}(M_y)\cap {\mathcal D}(M_z)\subset {\mathcal D}(M)$. Then, stochastic Maxwell equations \eqref{sto_max} can be decomposed into the following three subsystems
\begin{align}
    &{\rm d}{\bf Z}^{[1]}(t)=M_x {\bf Z}^{[1]}(t){\rm d}t+\lambda^{[1]}{\rm d}W(t), \label{system_1}\\[1.5mm]
    &{\rm d}{\bf Z}^{[2]}(t)=M_y {\bf Z}^{[2]}(t){\rm d}t+\lambda^{[2]}{\rm d}W(t), \label{system_2}\\[1.5mm]
    &{\rm d}{\bf Z}^{[3]}(t)=M_z {\bf Z}^{[3]}(t){\rm d}t+\lambda^{[3]}{\rm d}W(t),\label{system_3}
\end{align}
where $\lambda^{[1]}=(\lambda_1^1,0,0,\lambda_2^1,0,0)^{\top}$,
$\lambda^{[2]}=(0,\lambda_1^2,0,0,\lambda_2^2,0)^{\top}$, $\lambda^{[3]}=(0,0,\lambda_1^3,0,0,\lambda_2^3)^{\top}$.

\subsection{Properties of subsystems}
The splitting of the original system \eqref{sto_max} into the three subsystems \eqref{system_1}-\eqref{system_3} gives an approach to simulating stochastic Maxwell equations effectively and efficiently  in large scale and long time computations.

To show this clearly, we rewrite \eqref{system_1}-\eqref{system_3} into the following componentwise forms, respectively,
\begin{equation}\label{eq 4.55}
\left\{
\begin{aligned}
{\rm d}E_1^{[1]}(t)=\lambda_1^1{\rm d} W(t)\\
{\rm d}H_1^{[1]}(t)=\lambda_2^1{\rm d}W(t)
\end{aligned}
\right. ,\quad
\left\{
\begin{aligned}
{\rm d}E_2^{[1]}(t)=-\partial_x H_3^{[1]} (t){\rm d}t\\
{\rm d}H_3^{[1]}(t)=-\partial_x E_2^{[1]}(t){\rm d}t
\end{aligned}
\right. ,\quad
\left\{
\begin{aligned}
{\rm d} E_3^{[1]}(t)=\partial_x H_2^{[1]}(t) {\rm d}t\\
{\rm d}H_2^{[1]}(t)=\partial_x E_3^{[1]}(t){\rm d}t
\end{aligned}
\right. ,
\end{equation}
\begin{equation}\label{eq 4.56}
\left\{
\begin{aligned}
{\rm d}E_2^{[2]}(t)=\lambda_1^2{\rm d}W(t)\\
{\rm d}H_2^{[2]}(t)=\lambda_2^2{\rm d}W(t)
\end{aligned}
\right. ,\quad
\left\{
\begin{aligned}
{\rm d}E_1^{[2]}(t)=\partial_y H_3^{[2]}(t){\rm d}t\\
{\rm d} H_3^{[2]}(t)=\partial_y E_1^{[2]}(t) {\rm d}t
\end{aligned}
\right. ,\quad
\left\{
\begin{aligned}
{\rm d} E_3^{[2]}(t)=-\partial_y H_1^{[2]}(t) {\rm d}t\\
{\rm d}H_1^{[2]}(t)=-\partial_y E_3^{[2]}(t){\rm d}t
\end{aligned}
\right. ,
\end{equation}
\begin{equation}\label{eq 4.57}
\left\{
\begin{aligned}
{\rm d} E_3^{[3]}(t)=\lambda_1^3{\rm d}W(t)\\
{\rm d} H_3^{[3]}(t)=\lambda_2^3{\rm d}W(t)
\end{aligned}
\right. ,\quad
\left\{
\begin{aligned}
{\rm d}E_1^{[3]}(t)=-\partial_z H_2^{[3]}(t){\rm d}t\\
{\rm d}H_2^{[3]}(t)=-\partial_z E_1^{[3]}(t){\rm d}t
\end{aligned}
\right. ,\quad
\left\{
\begin{aligned}
{\rm d}E_2^{[3]}(t)=\partial_z H_1^{[3]}(t){\rm d}t\\
{\rm d}H_1^{[3]}(t)=\partial_z E_2^{[3]}(t){\rm d}t
\end{aligned}
\right. .
\end{equation}
%with boundary condition $E_2^{[1]}(t)=E_3^{[1]}(t)=0$ on $\Gamma_1^{\pm}$.
%The initial data $({\bf E}(0),{\bf H}(0))$ is such that
%$H_2^{[1]}(0)=0$ on  $\Gamma_2^{\pm}$, $H_3^{[1]}(0)=0$ on $\Gamma_3^{\pm}$. 
Note that \eqref{eq 4.55}-\eqref{eq 4.57} possess similar characters and structures. Taking \eqref{eq 4.55} for an example, 
 for components $E_1^{[1]}$ and $H_1^{[1]}$, they only contain the stochastic part and admit the explicit formulas of the exact solutions. And for other components $E_2^{[1]}, H_3^{[1]}$ and $E_3^{[1]},H_2^{[1]}$, they only contain the deterministic part with spatial derivative being in one space direction. They are two one-dimensional wave equations, which can be solved independently.
These characters of the splitting will lead to a dramatic reduction of computational costs in solving stochastic Maxwell equations.

\subsubsection{Well-posedness}
The skew-adjointness of  operators $M_x$, $M_y$ and $M_z$ is shown in the following lemma.
\begin{lemma}
Operators $M_\alpha~(\alpha=x,y,z)$ with domain ${\mathcal D}(M_{\alpha})$ are skew-adjoint on $V$.
\end{lemma}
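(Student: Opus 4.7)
The plan is to verify skew-adjointness for each $M_{\alpha}$; since the three cases are symmetric, I focus on $M_x$ and indicate the obvious modifications at the end. The argument splits into (i) skew-symmetry on $\mathcal{D}(M_x)$ and (ii) the maximality $\mathcal{D}(M_x^{\ast})\subseteq\mathcal{D}(M_x)$.

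For (i), take $u=(E_1,E_2,E_3,H_1,H_2,H_3)^{\top}$ and $v=(F_1,F_2,F_3,G_1,G_2,G_3)^{\top}$ in $\mathcal{D}(M_x)$, so that $u_2=u_3=0$ and $v_2=v_3=0$ on $\Gamma_1$. A componentwise expansion yields
\begin{equation*}
\langle M_x u,v\rangle_V=\int_D\bigl(-(\partial_x H_3)F_2+(\partial_x H_2)F_3+(\partial_x E_3)G_2-(\partial_x E_2)G_3\bigr)\,d\mathbf{x}.
\end{equation*}
A one-dimensional integration by parts in the $x$-variable rewrites this as $-\langle u,M_x v\rangle_V$ plus surface contributions on $\Gamma_1^{\pm}$ whose integrands are linear combinations of $E_2G_3$, $E_3G_2$, $F_2H_3$, $F_3H_2$; each of these four products vanishes by the domain conditions, so $\langle M_x u,v\rangle_V=-\langle u,M_x v\rangle_V$. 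This already gives $\mathcal{D}(M_x)\subseteq\mathcal{D}(M_x^{\ast})$ with $M_x^{\ast}$ extending $-M_x$.

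For (ii), let $v\in\mathcal{D}(M_x^{\ast})$ and let $w\in V$ satisfy $\langle M_x u,v\rangle_V=\langle u,w\rangle_V$ for every $u\in\mathcal{D}(M_x)$. Testing first against $u\in C_c^{\infty}(D)^6\subset\mathcal{D}(M_x)$ shows that $M_x v=-w$ in the sense of distributions, so $M_x v\in V$. It then remains to recover the trace relation $v_2=v_3=0$ on $\Gamma_1$: plugging in $u\in\mathcal{D}(M_x)$ whose transverse components $E_2,E_3$ are smooth with prescribed traces on $\Gamma_1$ and subtracting off the interior identity just established leaves a pure boundary pairing that must vanish for arbitrary admissible traces, forcing $v_2=v_3=0$ on $\Gamma_1$.

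The main obstacle is this trace step, because $v\in V$ together with $M_x v\in V$ does not a priori furnish $L^2$-traces of $v_2,v_3$ on $\Gamma_1$. The natural remedy exploits the purely one-dimensional nature of $M_x$: for almost every $(y,z)$ in the transverse section, $x\mapsto (v_2,v_3)(x,y,z)$ has distributional $\partial_x$-derivative in $L^2_x$, so the 1D trace theorem applied fiberwise yields well-defined $H^{-1/2}(\Gamma_1)$-traces against which the boundary pairing makes sense. A conceptually cleaner alternative is to write $V$ as a direct integral over $(y,z)$ and recognize $M_x$ as the direct integral of the classical skew-adjoint one-dimensional first-order system on $(a_1^-,a_1^+)$ with Dirichlet condition on the first electric component; fiberwise skew-adjointness then transfers to $M_x$. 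The operators $M_y$ and $M_z$ are handled by the identical scheme after permuting the coordinates and the corresponding boundary trace conditions.
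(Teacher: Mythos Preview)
Your approach—computing the adjoint domain directly and verifying $\mathcal{D}(M_x^{\ast})\subseteq\mathcal{D}(M_x)$—is valid but differs from the paper's. The paper instead invokes the criterion that a densely defined skew-symmetric operator is skew-adjoint once $Id\pm M_{\alpha}$ have dense range, and checks this by explicitly solving $(Id\pm M_x)g=f$ for arbitrary $f\in C^{\infty}(D)^6$: the system decouples into $E_1=f_1$, $H_1=f_4$, and two scalar equations $E_j-\partial_{xx}E_j=\widetilde f_j$ ($j=2,3$) with Dirichlet data on $\Gamma_1$, which are dispatched by Lax--Milgram for $I-\partial_{xx}$. This route completely avoids the trace issue that you correctly flag as the delicate point of your argument, at the price of a small elliptic detour; your direct-integral/fiberwise viewpoint is more structural and makes the one-dimensional nature of $M_x$ transparent.

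One slip in your write-up: to recover $v_2=v_3=0$ on $\Gamma_1$ you propose choosing $u\in\mathcal{D}(M_x)$ with \emph{prescribed} traces of $E_2,E_3$ on $\Gamma_1$, but these are precisely the components forced to vanish by the domain definition. It is the magnetic components $H_2,H_3$ (i.e.\ $u_5,u_6$) that carry no boundary constraint and whose traces on $\Gamma_1$ you may vary freely; the residual boundary pairing is $\int_{\Gamma_1}(-H_3 v_2+H_2 v_3)\,d\sigma$, and its vanishing for arbitrary $H_2,H_3$ is what forces $v_2=v_3=0$. Also, since $v_2,v_3\in L^2(D)$ with $\partial_x v_2,\partial_x v_3\in L^2(D)$, the fiberwise $H^1_x$-regularity actually yields traces in $L^2(\Gamma_1)$, not merely $H^{-1/2}$, so the boundary pairing is an honest $L^2$ duality. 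With these corrections your argument goes through.
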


\begin{proof}
To prove the skew-adjointness of $M_\alpha$ it is enough to show that $M_\alpha$ is a skew-symmetric operator and that $Id\pm M_\alpha$ has dense range. To show the skew-symmetry of $M_x$, we take $\psi=(u^{\top},~v^{\top})^{\top}$ and $\tilde{\psi}=(\tilde{u}^{\top},~\tilde{v}^{\top})^{\top}$ in ${\mathcal D}(M_x)$. The integration by parts formula then implies
\begin{align*}
\left\langle M_x\psi, \tilde{\psi}\right\rangle_V
&=\int_{D} \left(-\tilde{u}_2\partial_x v_3+\tilde{u}_3\partial_x v_2 +\tilde{v}_2\partial_x u_3-\tilde{v}_3\partial_x u_2 \right){\rm d}x{\rm d}y{\rm d}z\\[1.5mm]
&=\int_{D} \left(v_3 \partial_x\tilde{u}_2-v_2\partial_x\tilde{u}_3-u_3\partial_x\tilde{v}_2+u_2 \partial_x\tilde{v}_3\right){\rm d}x{\rm d}y{\rm d}z\\[1.5mm]
&=-\left\langle \psi,  M_x\tilde{\psi}\right\rangle_V,
\end{align*}
due to the boundary conditions in the definition of ${\mathcal D}(M_x)$. Thus $M_x$ is skew-symmetric and analogously for $M_y,\,M_z$.

To check the density of $Id\pm M_x$, we have to show that
\begin{equation}\label{1.42}
  \overline{{\rm ran}(Id\pm M_x)}=V.
\end{equation}
Because $C^{\infty}(D)^6$ is dense in $V$, we infer that \eqref{1.42} is equivalent to show that for every
$f\in C^{\infty}(D)^6$ there is a $g=({\bf E}^{\top},{\bf H}^{\top})^{\top}\in {\mathcal D}(M_x)$ such that
\begin{equation}\label{eq3.10}
(Id\pm M_x)g=f,
\end{equation}
or equivalently,
\begin{equation*}
  \begin{split}
    &E_1=f_1,\quad E_2 \mp \partial_x H_3 =f_2,\quad E_3\pm \partial_x H_2=f_3,\\[1.5mm]
    &H_1=f_4,\quad H_2\mp \partial_x E_3=f_5,\quad H_3\pm \partial_x E_2=f_6.
  \end{split}
\end{equation*}
It yields
\begin{align*}
E_2-\partial_{xx} E_2=f_2\pm \partial_x f_6=:\widetilde{f}_2\in L^2(D),\\[1.5mm]
E_3 -\partial_{xx} E_3=f_3\mp \partial_x f_5=:\widetilde{f}_3\in L^2(D).
\end{align*}
In order to solve these equations, we introduce the operator $A_x=\partial_{xx}$ with domain
\[
{\mathcal D}(A_x)=\{u\in L^2(D):~~\partial_x u,~A_x u\in L^2(D),~~u=0 \mbox{ on } \Gamma_1 \}.
\]
The Lax-Milgram lemma thus yields the existence of $E_2$, $E_3\in {\mathcal D}(A_x)$ such that $E_j-A_x E_j=\widetilde{f}_j,\, j=2,3$. Defining
\[
H_2=f_5\pm\partial_x E_3,\qquad H_3=f_6\mp\partial_x E_2,
\]
we obtain a solution $g=({\bf E}^{\top},{\bf H}^{\top})^{\top}\in {\mathcal D}(M_x)$ of \eqref{eq3.10}. Similarly, we can get the results for $M_y$ and $M_z$. Thus the proof is finished.
\end{proof}

By applying Stone's theorem, operators $M_x$, $M_y$ and $M_z$ generate unitary $C_0$-semigroups $S_x(t)=e^{tM_x}$, $S_y(t)=e^{tM_y}$ and $S_z(t)=e^{tM_z}$ on $V$ for $t\in \mathbb{R}^+$, respectively. Therefore each subsystem has a unique mild solutions  given by
\begin{align}
{\bf Z}^{[1]}(t)=S_{x}(t){\bf Z}_0^{[1]}+\lambda^{[1]}\int_0^{t} S_{x}(t-s){\rm d}W(s),\\
{\bf Z}^{[2]}(t)=S_{y}(t){\bf Z}_0^{[2]} +\lambda^{[2]}\int_0^{t}S_{y}(t-s){\rm d}W(s),\\
{\bf Z}^{[3]}(t)=S_{z}(t){\bf Z}_0^{[3]} +\lambda^{[3]}\int_0^{t}S_{z}(t-s){\rm d}W(s),
\end{align}
$\mathbb{P}$-a.s., respectively.

\begin{proposition}\label{sub-energy}
Let ${\bf Z}_0^{[j]}$ be ${\mathcal F}_0$-measurable $V$-valued random variables satisfying $\|{\bf Z}_0^{[j]}\|_{L^2(\Omega;V)}<\infty$ for $j=1,2,3$, and let $Q$ be a symmetric, positive definite, trace class operator on $U$. Then
\begin{align*}
{\mathbb E}\|{\bf Z}^{[j]}(t)\|^2_{V}={\mathbb E}\|{\bf Z}^{[j]}_0\|^2_{V}+t|\lambda^{[j]}|^2{\rm Tr}(Q).
\end{align*}
\end{proposition}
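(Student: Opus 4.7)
The plan is to mirror the argument used for the full system in \cite{CHZ2016} (see also \eqref{ave-energy}), exploiting the key structural fact that each of the operators $M_x, M_y, M_z$ is skew-adjoint on $V$ and hence generates a unitary $C_0$-semigroup $S_\alpha(t)$ ($\alpha\in\{x,y,z\}$). Fix $j\in\{1,2,3\}$ and let $\alpha$ be the corresponding spatial variable. I would start from the mild solution representation
\[
\mathbf{Z}^{[j]}(t)=S_\alpha(t)\mathbf{Z}_0^{[j]}+\int_0^t S_\alpha(t-s)\lambda^{[j]}\,\mathrm{d}W(s),
\]
and compute $\mathbb{E}\|\mathbf{Z}^{[j]}(t)\|_V^2$ by expanding the square.

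The cross term has zero expectation: the stochastic integral $\int_0^t S_\alpha(t-s)\lambda^{[j]}\,\mathrm{d}W(s)$ is a martingale with zero mean conditional on $\mathcal{F}_0$, while $S_\alpha(t)\mathbf{Z}_0^{[j]}$ is $\mathcal{F}_0$-measurable. For the deterministic part, unitarity of $S_\alpha(t)$ immediately gives $\|S_\alpha(t)\mathbf{Z}_0^{[j]}\|_V=\|\mathbf{Z}_0^{[j]}\|_V$. For the stochastic part, I would apply the Itô isometry with the Karhunen--Lo\`eve expansion of $W$ to obtain
\[
\mathbb{E}\Big\|\int_0^t S_\alpha(t-s)\lambda^{[j]}\,\mathrm{d}W(s)\Big\|_V^2
=\int_0^t\sum_{k\in\mathbb{N}}\bigl\|S_\alpha(t-s)\lambda^{[j]}Q^{\tfrac12}e_k\bigr\|_V^2\,\mathrm{d}s,
\]
then use unitarity of $S_\alpha$ again to remove the semigroup, and the componentwise structure $\|\lambda^{[j]}Q^{\tfrac12}e_k\|_V^2=|\lambda^{[j]}|^2\|Q^{\tfrac12}e_k\|_{L^2(D)}^2$, which, summing over $k$, produces $|\lambda^{[j]}|^2\mathrm{Tr}(Q)$. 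The $s$-integral then yields $t|\lambda^{[j]}|^2\mathrm{Tr}(Q)$, giving the claim.

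An alternative route is to apply the infinite-dimensional Itô formula to $\|\mathbf{Z}^{[j]}(t)\|_V^2$: the drift $2\langle\mathbf{Z}^{[j]},M_\alpha \mathbf{Z}^{[j]}\rangle_V$ vanishes by skew-adjointness of $M_\alpha$, the martingale part has zero expectation, and the Itô correction $\sum_k\|\lambda^{[j]}Q^{\tfrac12}e_k\|_V^2=|\lambda^{[j]}|^2\mathrm{Tr}(Q)$ integrated over $[0,t]$ produces the same result. Strictly, Itô's formula applies to strong solutions so one would either invoke a standard approximation argument in $\mathcal{D}(M_\alpha)$ or simply rely on the direct mild-solution computation described above.

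There is no serious obstacle: the proof is essentially a bookkeeping exercise combining unitarity of $S_\alpha$ with the Itô isometry. The only mild subtlety is verifying that $\|\lambda^{[j]}Q^{\tfrac12}e_k\|_V=|\lambda^{[j]}|\,\|Q^{\tfrac12}e_k\|_{L^2(D)}$, which follows because $\lambda^{[j]}\in\mathbb{R}^6$ acts as scalar multiplication componentwise on the scalar function $Q^{\tfrac12}e_k$. Summing over $j=1,2,3$ recovers $|\lambda|^2=|\lambda_1|^2+|\lambda_2|^2$, consistent with \eqref{ave-energy} for the full system.
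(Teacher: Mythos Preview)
Your proposal is correct. The paper does not give an explicit proof of this proposition, treating it as the direct analogue of the full-system energy law \eqref{ave-energy} from \cite[Theorem~2.1]{CHZ2016}; your argument via the mild solution, unitarity of $S_\alpha$, and the It\^o isometry is precisely the standard computation behind that cited result, so you are filling in exactly what the paper leaves implicit.
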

%%%%%%%%%%
\subsubsection{Stochastic symplecticity and multi-symplecticity}
The phase flow of stochastic Maxwell equations \eqref{sto_max} conserves the infinite-dimensional stochastic symplectic structure (\cite{CHJ2019b}) and stochastic multi-symplectic conservation law (\cite{CHZ2016, HJZ2014}). This part investigates these geometric structures for subsystems \eqref{system_1}-\eqref{system_3}.

To present the formulation of stochastic Hamiltonian system for each subsystem, we define 
%In this part, we consider the corresponding stochastic symplecticity and multi-symplecticity of  three subsystems \eqref{system_1}-\eqref{system_3}. 
%%%%%%%%%%%%%%%%%%%%%
\begin{equation*}
\begin{split}
\mathcal{H}_1^{[j]}=\frac{1}{2}\int_D\Big(|{\bf E}^{[j]}|^2+|{\bf H}^{[j]}|^2\Big){\rm d}{\bf x},\quad
\mathcal{H}_2^{[j]}=\int_D\Big(\lambda_1^{j}H_j^{[j]}-\lambda_2^{j}E_j^{[j]}\Big){\rm d}{\bf x},\\[1mm]
\end{split}
\end{equation*}
where $j=1,2,3$. Then subsystems \eqref{system_1}-\eqref{system_3} are reformulated as 
\begin{equation}\label{3.11}
\begin{bmatrix}
{\rm d}{\bf E}^{[j]}(t)\\[3mm]
{\rm d}{\bf H}^{[j]}(t)
\end{bmatrix}=\begin{bmatrix}
0 & {\rm curl}_{\alpha}\\[3mm]
-{\rm curl}_{\alpha} & 0
\end{bmatrix}
\begin{bmatrix}
\frac{\delta\mathcal{H}_1^{[j]}}{\delta {\bf E}^{[j]}}\\[3mm]
\frac{\delta\mathcal{H}_1^{[j]}}{\delta {\bf H}^{[j]}}
\end{bmatrix}
{\rm d}t
+\begin{bmatrix}
0 & Id\\[2mm]
-Id & 0
\end{bmatrix}
\begin{bmatrix}
\frac{\delta\mathcal{H}_2^{[j]}}{\delta {\bf E}^{[j]}}\\[3mm]
\frac{\delta\mathcal{H}_2^{[j]}}{\delta {\bf H}^{[j]}}
\end{bmatrix}
\circ{\rm d}W(t),
\end{equation}
 for $(\alpha,j)\in\{(x,1),(y,2),(z,3)\}$.
\begin{lemma}\label{symplectic_flow}
The phase flows of subsystems \eqref{system_1}, \eqref{system_2} and \eqref{system_3} preserve symplectic structures
\begin{equation*}
\overline{{\omega}}^{[j]}(t)=\overline{\omega}^{[j]}(0), \quad {\mathbb P}\text{-}a.s., \quad j=1,2,3,
\end{equation*}
respectively, where 
\begin{equation*}
\overline{\omega}^{[j]}(t):=\int_D{ d}{\bf E}^{[j]}(t,{\bf x})\wedge {\rm curl}_{\alpha}{ d}{\bf H}^{[j]}(t,{\bf x}){\rm d}{\bf x},
\end{equation*}
are the integral over the space domain of the differential 2-forms  with ‘d’ denoting the exterior derivative
for $(\alpha,j)\in\{(x,1),(y,2),(z,3)\}$, respectively. 
\end{lemma}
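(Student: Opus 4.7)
My plan is to pass to the variational equation for each subsystem and then differentiate $\overline{\omega}^{[j]}(t)$ in time, showing the result vanishes $\mathbb{P}$-a.s. Write $\mathbf{d}$ for the exterior derivative in phase space (with respect to initial data) and reserve $d$ for the time/stochastic differential. A key simplification is that the noise in \eqref{system_1}--\eqref{system_3} is additive with \emph{deterministic} amplitude $\lambda^{[j]}$, so $\mathbf{d}\lambda^{[j]}=0$ and the stochastic term drops out of the variational equation. Consequently, Itô and Stratonovich calculus coincide, and the variational equation for $j=1$ (with $\alpha=x$) reads
\begin{equation*}
d\,\mathbf{d}{\bf Z}^{[1]}(t) = M_x\,\mathbf{d}{\bf Z}^{[1]}(t)\,dt,\qquad\mathbb{P}\text{-a.s.},
\end{equation*}
i.e.\ $d\,\mathbf{d}{\bf E}^{[1]}=\mathrm{curl}_x\mathbf{d}{\bf H}^{[1]}\,dt$ and $d\,\mathbf{d}{\bf H}^{[1]}=-\mathrm{curl}_x\mathbf{d}{\bf E}^{[1]}\,dt$. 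Analogous equations hold for $j=2,3$ with $M_x$ replaced by $M_y,M_z$.

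Next, differentiate $\overline{\omega}^{[1]}(t)=\int_D \mathbf{d}{\bf E}^{[1]}\wedge\mathrm{curl}_x\mathbf{d}{\bf H}^{[1]}\,d{\bf x}$ in time, exchange $d/dt$ with the integral over $D$, apply the Leibniz rule for the wedge product, and substitute the variational equation. This produces two families of terms. The first is
\begin{equation*}
\int_D (\mathrm{curl}_x\mathbf{d}{\bf H}^{[1]})\wedge(\mathrm{curl}_x\mathbf{d}{\bf H}^{[1]})\,d{\bf x},
\end{equation*}
which vanishes by antisymmetry of the wedge product on $1$-forms. The second is
\begin{equation*}
-\int_D \mathbf{d}{\bf E}^{[1]}\wedge \mathrm{curl}_x(\mathrm{curl}_x\mathbf{d}{\bf E}^{[1]})\,d{\bf x};
\end{equation*}
here I integrate by parts in $x$ to shift one copy of $\mathrm{curl}_x$ onto the left factor, obtaining a boundary integral over $\Gamma_1$ plus a volume integrand again of the form $(\mathrm{curl}_x\mathbf{d}{\bf E}^{[1]})\wedge(\mathrm{curl}_x\mathbf{d}{\bf E}^{[1]})$ that vanishes by antisymmetry. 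The boundary term is killed by the conditions defining $\mathcal{D}(M_x)$: the components $E_2^{[1]},E_3^{[1]}$ vanish on $\Gamma_1$, and these linear constraints are inherited by $\mathbf{d}{\bf E}^{[1]}$ on $\Gamma_1$. Consequently $\frac{d}{dt}\overline{\omega}^{[1]}(t)=0$, $\mathbb{P}$-a.s., yielding $\overline{\omega}^{[1]}(t)=\overline{\omega}^{[1]}(0)$.

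The cases $j=2,3$ follow by the same three-move template (Leibniz, wedge antisymmetry, one integration by parts), with the obvious substitutions $(x,M_x,\Gamma_1)\mapsto(y,M_y,\Gamma_2)$ and $(x,M_x,\Gamma_1)\mapsto(z,M_z,\Gamma_3)$; the relevant tangential components $E_i^{[j]}$ vanish on the corresponding face by the definition of $\mathcal{D}(M_\alpha)$. I expect the main subtlety to be bookkeeping-level rather than conceptual: one must confirm that the linear boundary conditions in $\mathcal{D}(M_\alpha)$ genuinely transfer to the variational differential $\mathbf{d}{\bf Z}^{[j]}$ on the relevant face, which is immediate since those conditions are linear in the components of ${\bf Z}^{[j]}$, and one must justify exchanging the $\int_D$ and $d/dt$ operations, which is routine because the variational equation is a deterministic linear evolution with smooth-in-time sample paths.
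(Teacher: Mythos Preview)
Your proof is correct and follows essentially the same route as the paper. The paper's one-line proof---``take the exterior differential of \eqref{3.11} and use the skew-adjointness of $M_\alpha$ on $\mathcal{D}(M_\alpha)$''---packages exactly your three moves: the exterior differential yields the (deterministic, because the noise is additive) variational equation; the Leibniz/antisymmetry step and your integration by parts on $\mathrm{curl}_\alpha$ together constitute the skew-adjointness of $M_\alpha$, with the boundary contribution on $\Gamma_j$ disposed of by the linear trace conditions built into $\mathcal{D}(M_\alpha)$, just as you note.
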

\begin{proof}
Taking the exterior differential on both sides of \eqref{3.11} and utilizing the skew-adjointness of $M_{\alpha}$ on ${\mathcal D}(M_{\alpha})$ yield the conclusion.
\end{proof}

We remark that the canonical formulations of stochastic Hamiltonian system  for subsystems  \eqref{system_1}-\eqref{system_3} are 
\begin{equation}
\begin{bmatrix}
{\rm d}{\bf E}^{[j]}(t)\\[3mm]
{\rm d}{\bf H}^{[j]}(t)
\end{bmatrix}=\begin{bmatrix}
0 & Id\\[3mm]
-Id & 0
\end{bmatrix}
\begin{bmatrix}
\frac{\delta\mathcal{H}_1^{[\alpha]}}{\delta {\bf E}^{[j]}}\\[3mm]
\frac{\delta\mathcal{H}_1^{[\alpha]}}{\delta {\bf H}^{[j]}}
\end{bmatrix}
{\rm d}t
+\begin{bmatrix}
0 & Id\\[2mm]
-Id & 0
\end{bmatrix}
\begin{bmatrix}
\frac{\delta\mathcal{H}_2^{[j]}}{\delta {\bf E}^{[j]}}\\[3mm]
\frac{\delta\mathcal{H}_2^{[j]}}{\delta {\bf H}^{[j]}}
\end{bmatrix}
\circ{\rm d}W(t),
\end{equation}
 with \begin{equation*}
\begin{split}
\mathcal{H}_1^{[\alpha]}&=\frac{1}{2}\int_D\Big({\bf E}^{[j]}\cdot{\rm curl}_{\alpha}{\bf E}^{[j]}+{\bf H}^{[j]}\cdot{\rm curl}_{\alpha}{\bf H}^{[j]}\Big){\rm d}{\bf x},
\end{split}
\end{equation*}
where $(\alpha,j)\in\{(x,1),(y,2),(z,3)\}$ respectively. In this case, the symplectic structures 
\begin{equation*}
\overline{\omega}^{[j]}(t):=\int_Dd{\bf E}^{[j]}(t,{\bf x})\wedge d{\bf H}^{[j]}(t,{\bf x}){\rm d}{\bf x},\quad j=1,2,3
\end{equation*}
are preserved by the phase flows of subsystems \eqref{system_1}-\eqref{system_3}
respectively if  zero boundary conditions are enforced (see \cite[Theorem 3.2]{CHJ2019b}).

%%%%%%%%%%%%%%%%%%%%%%%

Next we consider the stochastic multi-symplecticity of subsystems  \eqref{system_1}-\eqref{system_3}. Let $u^{[j]}=(H_1^{[j]},H_2^{[j]},H_3^{[j]},E_1^{[j]},E_2^{[j]},E_3^{[j]})^{\top}, j=1, 2, 3$, and denote skew-symmetric matrices
\begin{equation*}\label{FK}
F=\begin{bmatrix}
0 & Id\\[3mm]
-Id & 0
\end{bmatrix},\quad
K_{j}=\begin{bmatrix}
\mathcal{D}_{j}&0\\[3mm]
0&\mathcal{D}_{j}\\
\end{bmatrix},
\end{equation*}
with
$$
	\mathcal{D}_{1}=\left(\begin{array}{ccccccc}
	0&0&0\\[1mm]
	0&0&-1\\[1mm]
	0&~1&0\\[1mm]
	\end{array}\right),~~
	\mathcal{D}_{2}=\left(\begin{array}{ccccccc}
	0&0&~1\\[1mm]
	0&0&0\\[1mm]
	-1&0&0\\[1mm]
	\end{array}\right),~~
	\mathcal{D}_{3}=\left(\begin{array}{ccccccc}
	0&-1&0\\[1mm]
	~1&0&0\\[1mm]
	0&0&0\\[1mm]
	\end{array}\right), $$
and Hamiltonian
$$S(u^{[j]})=\lambda_2^{j}H_1^{[j]}-\lambda_1^{j}E_1^{[j]}.$$
Then,  \eqref{system_1}-\eqref{system_3} can be written as
\begin{equation}
F{\rm d}u^{[j]}+K_j \partial_{\alpha}u^{[j]}{\rm d}t=\nabla S(u^{[j]})\circ {\rm d}W,
\end{equation}
with $(\alpha,j)\in\{(x,1),(y,2),(z,3)\}$, respectively. Similar to the proof of \cite[Theorem 2.3]{CHZ2016}, we  get the following stochastic multi-symplecticity.
\begin{lemma}
Subsystems \eqref{system_1}, \eqref{system_2} and \eqref{system_3} possess  stochastic multi-symplectic conservative laws, respectively, i.e., for $(\alpha,j)\in\{(x,1),(y,2),(z,3)\}$,
\begin{equation*}
{\rm d}\omega^{[j]}+\partial_\alpha\kappa^{[j]}{\rm d}t=0,\quad {\mathbb P}\text{-}a.s.,
\end{equation*}
where 
$$\omega^{[j]}(t,{\bf x})=\frac{1}{2}du^{[j]}\wedge Fdu^{[j]},\kappa^{[j]}(t,{\bf x})=\frac{1}{2}du^{[j]}\wedge K_jdu^{[j]},$$
are  differential 2-forms associated with  skew-symmetric matrices $F$ and $K_j$.
\end{lemma}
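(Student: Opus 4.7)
The plan is to proceed exactly along the lines of the classical proof of stochastic multi-symplecticity for a single Hamiltonian PDE (cf.\ \cite[Theorem 2.3]{CHZ2016}), adapted to each subsystem. The only role the splitting plays is to replace the full operator $K\partial$ associated with the Maxwell system by one of the single-direction operators $K_j\partial_\alpha$ with $(\alpha,j)\in\{(x,1),(y,2),(z,3)\}$; the abstract structure $F\,{\rm d}u+K\partial u\,{\rm d}t=\nabla S(u)\circ{\rm d}W$ with $F,K$ constant and skew-symmetric and $S$ smooth is preserved.

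The first step is to apply the exterior differential $d$ (in the phase variable $u^{[j]}$) to both sides of the reformulated equation
\[
F\,{\rm d}u^{[j]}+K_j\,\partial_\alpha u^{[j]}\,{\rm d}t=\nabla S(u^{[j]})\circ{\rm d}W.
\]
Since $F$ and $K_j$ are constant, $d$ commutes with them and with $\partial_\alpha$, yielding
\[
F\,{\rm d}(du^{[j]})+K_j\,\partial_\alpha(du^{[j]})\,{\rm d}t=\nabla^2 S(u^{[j]})\,du^{[j]}\circ{\rm d}W.
\]
The next step is to take the wedge product from the left with $du^{[j]}$ and use skew-symmetry of $F$ and $K_j$, together with the fact that Stratonovich calculus obeys the ordinary Leibniz/chain rule. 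This allows us to identify
\[
du^{[j]}\wedge F\,{\rm d}(du^{[j]})=\tfrac12\,{\rm d}\bigl(du^{[j]}\wedge F\,du^{[j]}\bigr)={\rm d}\omega^{[j]},
\]
and analogously
\[
du^{[j]}\wedge K_j\,\partial_\alpha(du^{[j]})=\tfrac12\,\partial_\alpha\bigl(du^{[j]}\wedge K_j\,du^{[j]}\bigr)=\partial_\alpha\kappa^{[j]},
\]
so the left-hand side becomes exactly ${\rm d}\omega^{[j]}+\partial_\alpha\kappa^{[j]}\,{\rm d}t$.

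It remains to show that the right-hand side vanishes. After wedging, it equals
\[
du^{[j]}\wedge\nabla^2 S(u^{[j]})\,du^{[j]}\circ{\rm d}W,
\]
and because $\nabla^2 S$ is a symmetric $6\times 6$ matrix (here in fact $S(u^{[j]})=\lambda_2^j H_1^{[j]}-\lambda_1^j E_1^{[j]}$ is linear, so $\nabla^2 S\equiv 0$, which trivially makes it vanish, but the symmetric-Hessian argument works in general), the wedge form $du^{[j]}\wedge\nabla^2 S(u^{[j]})\,du^{[j]}$ is identically zero. Combining these identities produces the claimed conservation law ${\rm d}\omega^{[j]}+\partial_\alpha\kappa^{[j]}\,{\rm d}t=0$, $\mathbb{P}$-a.s.

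The main subtlety to be careful about is the justification of the Stratonovich chain rule at the level of differential forms on an infinite-dimensional phase space; this is precisely where the Stratonovich (rather than It\^o) formulation of the noise term matters, and it is also the point that is handled in \cite[Theorem 2.3]{CHZ2016} for the full Maxwell system. Once that manipulation is granted, the remainder is algebraic and the three cases $(\alpha,j)\in\{(x,1),(y,2),(z,3)\}$ are verbatim identical, so no new difficulty arises from the splitting itself.
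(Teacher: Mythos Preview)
Your proposal is correct and follows essentially the same approach as the paper: the paper does not give an independent argument but simply states that the result is obtained ``similar to the proof of \cite[Theorem 2.3]{CHZ2016}'', which is exactly the variational/exterior-differential computation you spell out. Your added remark that here $S$ is linear so $\nabla^2 S\equiv 0$ (making the vanishing of the right-hand side trivial) is a nice simplification specific to the additive-noise setting.
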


\section{Error estimates of the operator splitting method}
This section presents the error analysis of the operator splitting method. Moreover, the errors caused by temporally semi-discretizing   subsystems are also analyzed.

Let $t_n=n\tau ~(n=0,1,\cdots,N)$ be a uniform partition of $[0,T]$ with $\tau=T/N$, and let $\{\Psi^{[j]}_{s,t}:0\leq s\leq t\} ~(j=1,2,3)$ be  solution flows of subsystems \eqref{system_1}-\eqref{system_3}, respectively. 
Let $\tilde{{\bf Z}}(t_{n-1})=(\tilde{\bf E}(t_{n-1}),\tilde{\bf H}(t_{n-1}))$ denote the approximated solution at time $t_{n-1}$, and let $\tilde{{\bf Z}}(t_{0})={\bf Z}_0$.
The detailed algorithm for solving stochastic Maxwell equations \eqref{sto_max} on the interval $[t_{n-1},t_{n}]$ with $n=1,\cdots,N$ is given  by
\begin{itemize}
	\item {\bf Stage 1.} Take $\tilde{{\bf Z}}(t_{n-1})$ as the initial data, solve \eqref{system_1} or \eqref{eq 4.55}, and obtain $\Psi^{[1]}_{t_{n-1},t_{n}}\big(\tilde{{\bf Z}}(t_{n-1})\big)$;\\
	\item {\bf Stage 2.} Take $\Psi^{[1]}_{t_{n-1},t_{n}}\tilde{{\bf Z}}(t_{n-1})$ as the initial data, solve \eqref{system_2} or \eqref{eq 4.56}, and obtain $\Psi^{[2]}_{t_{n-1},t_{n}} \circ \Psi^{[1]}_{t_{n-1},t_{n}}\big(\tilde{{\bf Z}}(t_{n-1})\big)$;\\
	\item {\bf Stage 3.} Take $\Psi^{[2]}_{t_{n-1},t_{n}} \circ \Psi^{[1]}_{t_{n-1},t_{n}}\tilde{{\bf Z}}(t_{n-1})$ as the initial data, solve \eqref{system_3} or \eqref{eq 4.57}, and obtain $\tilde{{\bf Z}}(t_{n})=\Psi_{t_{n-1},t_{n}}\big(\tilde{{\bf Z}}(t_{n-1})\big)$
	with $\Psi_{t_{n-1},t_{n}}:=\Psi^{[3]}_{t_{n-1},t_{n}}\circ \Psi^{[2]}_{t_{n-1},t_{n}} \circ \Psi^{[1]}_{t_{n-1},t_{n}}$.
\end{itemize}

Thus, the composed solution at time $t_{n}=n\tau$ reads
\begin{equation}\label{eq 4.1}
\tilde{{\bf Z}}(t_{n})=\Psi_{t_{n-1},t_{n}}\circ \Psi_{t_{n-2},t_{n-1}}\circ\cdots\circ \Psi_{t_1,t_{2}}\circ \Psi_{t_0,t_{1}}
({\bf Z}_0).
\end{equation}
Note that all the results in this paper still hold for other splittings by changing the order of subsystems \eqref{system_1}-\eqref{system_3}, for example, by using another order $\Psi_{t_{n-1},t_{n}}:=\Psi^{[1]}_{t_{n-1},t_{n}}\circ \Psi^{[2]}_{t_{n-1},t_{n}} \circ \Psi^{[3]}_{t_{n-1},t_{n}}$ and so on.

If $\Psi^{[j]}_{s,t}$ denotes  the exact solution flow of each subsystem, then based on Proposition \ref{sub-energy}, it can be seen that the averaged energy is preserved exactly by the splitting method \eqref{eq 4.1}, i.e.,
\[
{\mathbb E}\|\tilde{{\bf Z}}(t_{n})\|_{V}^2={\mathbb E}\|{\bf Z}_0\|_{V}^2+t_n |\lambda|^2{\rm Tr}(Q).
\]

\subsection{Splitting error}\label{spl_err}
In this subsection we denote $\Psi^{[j]}_{s,t}$  the exact solution flow of each subsystem and establish the mean square convergence analysis of the splitting algorithm for stochastic Maxwell equations \eqref{sto_max}. 	
To present the formulation of $\Psi_{t_n,t_{n+1}}$, we note that the mild solution of the subsystem \eqref{system_1} satisfies
\begin{equation*}
\Psi_{t_n,t_{n+1}}^{[1]}(\tilde{{\bf Z}}(t_{n}))=S_{x}(\tau)\tilde{{\bf Z}}(t_{n})+ \int_{t_n}^{t_{n+1}}S_x(t_{n+1}-r)\lambda^{[1]}{\rm d}W(r).
\end{equation*}
Similarly for \eqref{system_2} starting from $\Psi_{t_n,t_{n+1}}^{[1]}(\tilde{{\bf Z}}(t_{n}))$ at time $t_n$, 
\begin{align*}
\Psi_{t_n,t_{n+1}}^{[2]}\circ \Psi_{t_n,t_{n+1}}^{[1]}(\tilde{{\bf Z}}(t_{n}))=&S_{y}(\tau)\Psi_{t_n,t_{n+1}}^{[1]}(\tilde{{\bf Z}}(t_{n}))+ \int_{t_n}^{t_{n+1}}S_y(t_{n+1}-r)\lambda^{[2]}{\rm d}W(r)\\
=&S_{y}(\tau)S_{x}(\tau)\tilde{{\bf Z}}(t_{n})+\int_{t_n}^{t_{n+1}}S_y(\tau)S_x(t_{n+1}-r)\lambda^{[1]}{\rm d}W(r)\\
&+ \int_{t_n}^{t_{n+1}}S_y(t_{n+1}-r)\lambda^{[2]}{\rm d}W(r),
\end{align*}
and then for \eqref{system_3} starting from $\Psi_{t_n,t_{n+1}}^{[2]}\circ \Psi_{t_n,t_{n+1}}^{[1]}(\tilde{{\bf Z}}(t_{n}))$ at time $t_n$, 
\begin{equation}\label{eq_4.3}
\begin{split}
\tilde{{\bf Z}}(t_{n+1})=&\Psi_{t_n,t_{n+1}}(\tilde{{\bf Z}}(t_{n}))=\Psi_{t_n,t_{n+1}}^{[3]}\circ \Psi_{t_n,t_{n+1}}^{[2]}\circ \Psi_{t_n,t_{n+1}}^{[1]}(\tilde{{\bf Z}}(t_{n}))\\[2mm]
=&S_{z}(\tau)\Psi_{t_n,t_{n+1}}^{[2]}\circ \Psi_{t_n,t_{n+1}}^{[1]}(\tilde{{\bf Z}}(t_{n}))+ \int_{t_n}^{t_{n+1}}S_z(t_{n+1}-r)\lambda^{[3]}{\rm d}W(r)\\[2mm]
=&S_{z}(\tau)S_{y}(\tau)S_{x}(\tau)\tilde{{\bf Z}}(t_{n})+\int_{t_n}^{t_{n+1}}S_{z}(\tau)S_y(\tau)S_x(t_{n+1}-r)\lambda^{[1]}{\rm d}W(r)\\[2mm]
& + \int_{t_n}^{t_{n+1}}S_z(\tau)S_y(t_{n+1}-r)\lambda^{[2]}{\rm d}W(r) + \int_{t_n}^{t_{n+1}}S_z(t_{n+1}-r)\lambda^{[3]}{\rm d}W(r).
\end{split}
\end{equation}

Denote the splitting error  at time $t_n$ by $e^n:={\bf Z}(t_n)-\tilde{{\bf Z}}(t_n)$, $n=0,1,\cdots,N$. We now give the mean square convergence analysis of the error.
\begin{theorem}\label{thm_4.1}
Under the same assumption as in Proposition \ref{pro_2.2}, for sufficiently small $\tau$, there exists a constant $C$ independent of $\tau$ such that
	\begin{equation}\label{err_1}
	\max_{0\leq n\leq N}\mathbb{E}\left[\|e_n\|_{V}^2\right]\leq C\tau^2.
	\end{equation}
\end{theorem}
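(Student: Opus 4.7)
The plan is to set up a one-step error recursion, isolate a deterministic local error from the Lie-type splitting of the Maxwell semigroup and a stochastic local error from the decomposition $\lambda=\lambda^{[1]}+\lambda^{[2]}+\lambda^{[3]}$, bound each using the $H^2$-regularity furnished by Proposition~\ref{pro_2.2}, and close with a discrete Gronwall argument. Concretely, subtracting the mild form of \eqref{abstract form} on $[t_n,t_{n+1}]$ from \eqref{eq_4.3} gives
\begin{equation*}
e^{n+1}=S_z(\tau)S_y(\tau)S_x(\tau)\,e^n+d^{n+1}+R^{n+1},
\end{equation*}
where $d^{n+1}=[S(\tau)-S_z(\tau)S_y(\tau)S_x(\tau)]\mathbf{Z}(t_n)$ is the deterministic local error, and $R^{n+1}$ collects the three stochastic local errors of the form $\int_{t_n}^{t_{n+1}}[S(t_{n+1}-r)-\widehat S^{[j]}(t_{n+1}-r)]\lambda^{[j]}\,\mathrm dW(r)$, with the operator $\widehat S^{[j]}$ read off from \eqref{eq_4.3}.

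Next I would take $\mathbb E\|\cdot\|_V^2$. Each $S_\alpha$ is unitary, so $\|S_zS_yS_xe^n\|_V=\|e^n\|_V$; since $R^{n+1}$ is independent of $\mathcal F_{t_n}$ with mean zero while $e^n$ and $d^{n+1}$ are $\mathcal F_{t_n}$-measurable, the cross term in $\mathbb E\|S_zS_yS_xe^n+d^{n+1}+R^{n+1}\|_V^2$ vanishes. Young's inequality then yields
\begin{equation*}
\mathbb E\|e^{n+1}\|_V^2\le(1+\tau)\,\mathbb E\|e^n\|_V^2+(1+\tau^{-1})\,\mathbb E\|d^{n+1}\|_V^2+\mathbb E\|R^{n+1}\|_V^2.
\end{equation*}
Starting from $e^0=0$, discrete Gronwall reduces \eqref{err_1} to the two local-error bounds $\mathbb E\|d^{n+1}\|_V^2\le C\tau^4$ and $\mathbb E\|R^{n+1}\|_V^2\le C\tau^3$.

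The main obstacle is the $O(\tau^2)$ bound on the deterministic Lie-splitting error. I would exploit the identity
\begin{equation*}
S(\tau)-S_z(\tau)S_y(\tau)S_x(\tau)=\int_0^{\tau}\frac{d}{ds}\bigl[S(s)S_z(\tau-s)S_y(\tau-s)S_x(\tau-s)\bigr]\mathrm ds,
\end{equation*}
differentiate using $M=M_x+M_y+M_z$ and the fact that each $M_\alpha$ commutes with its own $S_\alpha$; the $M_z$ contributions then cancel and the integrand collapses into
\begin{equation*}
S(s)\bigl([M_y,S_z(\tau-s)]S_y(\tau-s)+[M_x,S_z(\tau-s)S_y(\tau-s)]\bigr)S_x(\tau-s).
\end{equation*}
A further Duhamel representation of the form $[M_y,S_z(u)]=\int_0^{u}S_z(u-v)[M_y,M_z]S_z(v)\,\mathrm dv$ (and an analogue for the second commutator) factors out an extra $O(\tau)$, and the explicit form of $[M_i,M_j]$ shows that it is a second-order differential operator whose $V$-bound is controlled by the $H^2(D)^6$-norm. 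Combined with the unitarity of all semigroups involved and Proposition~\ref{pro_2.2}, this delivers $\mathbb E\|d^{n+1}\|_V^2\le C\tau^4$. The subtleties to track are that the boundary conditions of $\mathbf{Z}(t_n)$ are preserved by each $S_\alpha$ so that the nested differential operators are well-defined, and that the higher-order commutator remainders still sit in $V$ under the same $H^2$-assumption.

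For the stochastic local error, Itô's isometry and independence of increments reduce matters to bounding $\|[S(\sigma)-\widehat S^{[j]}(\sigma)]\lambda^{[j]}Q^{1/2}\|_{HS(U,V)}$ uniformly in $\sigma\in[0,\tau]$. I would split, for example,
\begin{equation*}
S(\sigma)-S_z(\tau)S_y(\tau)S_x(\sigma)=\bigl[S(\sigma)-S_z(\sigma)S_y(\sigma)S_x(\sigma)\bigr]+\bigl[S_z(\sigma)S_y(\sigma)-S_z(\tau)S_y(\tau)\bigr]S_x(\sigma),
\end{equation*}
reducing the first piece to the preceding Lie-splitting bound ($O(\sigma^2)$ on $H^2$) and the second to a first-order telescoping estimate of order $\tau-\sigma$ on $H^1$. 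The assumptions $Q^{1/2}\in HS(U,H^2(D)\cap H_0^1(D))$ and $\nabla Q^{1/2}:U\to H_{00}^1(D)^3$ then supply the required regularity, giving $\mathbb E\|R^{n+1}\|_V^2\le C\tau^3$. Inserting both local bounds into the recursion and applying discrete Gronwall with $e^0=0$ yields $\max_n\mathbb E\|e^n\|_V^2\le CTe^T\tau^2$, which is \eqref{err_1}.
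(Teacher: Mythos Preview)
Your overall architecture matches the paper's exactly: the same error recursion
\[
e^{n+1}=S_z(\tau)S_y(\tau)S_x(\tau)e^n+d^{n+1}+R^{n+1},
\]
the same use of unitarity, the vanishing of the cross term with $R^{n+1}$ by $\mathcal F_{t_n}$-measurability, Young's inequality with weight $\tau^{-1}$ on $d^{n+1}$, and discrete Gronwall. Where you genuinely diverge is in the mechanism for the deterministic local bound $\mathbb E\|d^{n+1}\|_V^2\le C\tau^4$. The paper does not use your commutator identity; instead it invokes the Hansen--Ostermann bounded operators $\alpha_k(\tau F)=\int_0^1 e^{(1-\xi)\tau F}\xi^{k-1}/(k-1)!\,\mathrm d\xi$ together with a variation-of-constants expansion of $S(\tau)\mathbf Z(t_n)$ around $S_z(\tau)$, so that every term in $d^{n+1}$ is of the explicit form $\tau^2\,(\text{bounded operator})\cdot M_\beta M_\gamma\mathbf Z(t_n)$ or an integral of the same shape. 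The practical advantage of the paper's route is that the second-order operators land directly on $\mathbf Z(t_n)$, so only the $H^2$-bound of Proposition~\ref{pro_2.2} for $\mathbf Z(t_n)$ itself is needed. Your commutator/Duhamel expansion is equally valid, but it requires the intermediate objects $S_x(\tau-s)\mathbf Z(t_n)$, $S_y(\tau-s)S_x(\tau-s)\mathbf Z(t_n)$, and $S_z(v)[\cdots]$ to remain in $H^2(D)^6$ with the correct traces so that the nested $[M_i,M_j]$ make sense; this is true (each $S_\alpha$ is one-dimensional wave propagation plus the identity on the remaining components), but it is an extra invariance lemma you would have to state and prove. For the stochastic local error the paper's decomposition is also a bit more economical than yours: it writes, e.g.,
\[
S(\sigma)-S_z(\tau)S_y(\tau)S_x(\sigma)=(Id-S_z(\tau))S(\sigma)+S_z(\tau)(Id-S_y(\tau))S(\sigma)+S_z(\tau)S_y(\tau)(S(\sigma)-S_x(\sigma)),
\]
and bounds each piece by $C\tau$ on $\mathcal D(M_\alpha)$, so only $Q^{1/2}\in HS(U,H^1)$ is used there; your splitting via the Lie error re-uses the $H^2$ hypothesis on $Q^{1/2}$, which is available anyway but not strictly necessary for this step.
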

\begin{proof}

Since $\lambda=\lambda^{[1]}+\lambda^{[2]}+\lambda^{[3]}$, the mild solution of \eqref{abstract form} is
\begin{equation}\label{eq_4.4}
\begin{split}
{\bf Z}(t_{n+1})=S(\tau)&{\bf Z}(t_n)+\int_{t_n}^{t_{n+1}}S(t_{n+1}-r)\lambda^{[1]}{\rm d}W(r)\\%[1mm]
&+\int_{t_n}^{t_{n+1}}S(t_{n+1}-r)\lambda^{[2]}{\rm d}W(r)+\int_{t_n}^{t_{n+1}}S(t_{n+1}-r)\lambda^{[3]}{\rm d}W(r).
\end{split}
\end{equation}
Subtracting \eqref{eq_4.3} from \eqref{eq_4.4}, we obtain
\begin{align}\label{eq_4.5}
%\begin{split}
e_{n+1}=&S_z(\tau)S_y(\tau)S_x(\tau)e_n+ (S(\tau)-S_z(\tau)S_y(\tau)S_x(\tau)){\bf Z}(t_n)\nonumber\\%[1.5mm]
&+\int_{t_n}^{t_{n+1}}[S(t_{n+1}-r)-S_{z}(\tau)S_y(\tau)S_x(t_{n+1}-r)]\lambda^{[1]}{\rm d}W(r)\nonumber\\%[1.5mm]
&+\int_{t_n}^{t_{n+1}}[S(t_{n+1}-r)-S_z(\tau)S_y(t_{n+1}-r)]\lambda^{[2]}{\rm d}W(r)\\%[1.5mm]
&+\int_{t_n}^{t_{n+1}}[S(t_{n+1}-r)-S_z(t_{n+1}-r)]\lambda^{[3]}{\rm d}W(r)\nonumber\\%[1.5mm]
&=:S_z(\tau)S_y(\tau)S_x(\tau)e_n+I_1+I_2+I_3+I_4.\nonumber
%\end{split}
\end{align}
Next, we give the estimates of terms $I_j$, $j=1,\ldots, 4$. 
\vskip 0.5em
(i) {\it  Estimate of term $I_1$}
\vskip 0.5em
Following \cite{HO2008},
for the generator $F$ of a $C_0$-semigroup and a real number $\tau\geq 0$, we define the bounded operators $\alpha_0(\tau F)=e^{\tau F}$ and
\[
\alpha_{k}(\tau F)=\int_0^1 e^{(1-\xi)\tau F}\frac{\xi^{k-1}}{(k-1)!}{\rm d}\xi,\quad \mbox{for }k\geq 1.
\]
These operators satisfy the recurrence relation
\[
\alpha_{k}(\tau F)=\frac{1}{k!}Id+\tau F \alpha_{k+1}(\tau F), \qquad k\geq 0.
\]

Notice that $S(\tau){\bf Z}(t_n)$ is the solution of the problem $\frac{\rm d}{{\rm d}t}{\bf Z}(t)=(M_z+M_{y}+M_{x}){\bf Z}(t)$ at time $t=t_n$. It can be rewritten by the variation-of-constants formula
\small\begin{equation}\label{eq 4.2}
\begin{split}
&{\bf Z}(t_{n+1})=S(\tau){\bf Z}(t_n)\\
&=S_{z}(\tau){\bf Z}(t_n)+\int_{t_{n}}^{t_{n+1}} S_{z}(s)M_{y}S(t_{n+1}-s){\bf Z}(t_n){\rm d}s+\int_{t_{n}}^{t_{n+1}} S_{z}(s)M_{x}S(t_{n+1}-s){\bf Z}(t_n){\rm d}s\\
&=S_{z}(\tau){\bf Z}(t_n)+\tau S_{z}(\tau)M_y {\bf Z}(t_n)-\int_{t_{n}}^{t_{n+1}}sS_{z}(s)\big( M_z M_y-M_y M \big) S(t_{n+1}-s){\bf Z}(t_n){\rm d}s\\
&\quad+\tau S_{z}(\tau)M_x {\bf Z}(t_n)-\int_{t_{n}}^{t_{n+1}}sS_{z}(s)\big( M_z M_x-M_x M \big) S(t_{n+1}-s){\bf Z}(t_n){\rm d}s,
\end{split}
\end{equation}
\normalsize
where the  integration  by parts formula is utilized in the last step.
For the term $S_{z}(\tau){\bf Z}(t_n)$, we use the relations
\[
S_y(\tau)=\alpha_0(\tau M_y)=Id+\tau M_y +\tau^2 \alpha_2(\tau M_y)M_y^2
\]
and 
\[
S_x(\tau)=\alpha_0(\tau M_x)=Id+\tau M_x +\tau^2 \alpha_2(\tau M_x)M_x^2
\]
to get
\begin{align*}
S_{z}(\tau){\bf Z}(t_n)&=S_{z}(\tau)\Big[S_y(\tau)-\tau M_y -\tau^2 \alpha_2(\tau M_y)M_y^2\Big] {\bf Z}(t_n)\\[2mm]
&=S_{z}(\tau)S_y(\tau){\bf Z}(t_n)- \tau S_{z}(\tau)M_y{\bf Z}(t_n)-\tau^2 S_{z}(\tau)\alpha_2(\tau M_y)M_y^2{\bf Z}(t_n)\\[2mm]
&=S_{z}(\tau)S_y(\tau)\Big[S_x(\tau)-\tau M_x -\tau^2 \alpha_2(\tau M_x)M_x^2 \Big]{\bf Z}(t_n)\\[2mm]
&\quad- \tau S_{z}(\tau)M_y{\bf Z}(t_n)-\tau^2 S_{z}(\tau)\alpha_2(\tau M_y)M_y^2{\bf Z}(t_n)\\[2mm]
&=S_{z}(\tau)S_y(\tau)S_x(\tau){\bf Z}(t_n)-\tau S_{z}(\tau)S_y(\tau)M_x {\bf Z}(t_n)- \tau S_{z}(\tau)M_y{\bf Z}(t_n)\\[2mm]
&\quad-\tau^2 S_{z}(\tau)S_y(\tau)\alpha_2(\tau M_x)M_x^2{\bf Z}(t_n) -\tau^2 S_{z}(\tau)\alpha_2(\tau M_y)M_y^2{\bf Z}(t_n).
\end{align*}
Substituting it into \eqref{eq 4.2} and using the relation $Id-S_y(\tau)=-\tau M_y \alpha_1(\tau M_y)$ yield
\begin{align*}
&I_1=\Big(S(\tau)-S_{z}(\tau)S_{y}(\tau)S_{x}(\tau)\Big){\bf Z}(t_n)\\[2mm]
&=-\tau^2S_z(\tau) \alpha_1(\tau M_y)M_y M_x {\bf Z}(t_n)
-\tau^2 S_{z}(\tau)S_y(\tau)\alpha_2(\tau M_x)M_x^2{\bf Z}(t_n)\\[2mm]
&\quad-\tau^2 S_{z}(\tau)\alpha_2(\tau M_y)M_y^2{\bf Z}(t_n)
-\int_{t_n}^{t_{n+1}} sS_{z}(s)\big( M_z M_y-M_y M \big) S(t_{n+1}-s){\bf Z}(t_n){\rm d}s\\[2mm]
&\quad-\int_{t_n}^{t_{n+1}} sS_{z}(s)\big( M_z M_x-M_x M \big) S(t_{n+1}-s){\bf Z}(t_n){\rm d}s.
\end{align*}
It follows from the $H^2$-regularity of continuous solution ${\bf Z}(t)$ (see Proposition \ref{pro_2.2}) that 
\begin{equation}\label{eq_4.7}
\mathbb{E}\left\|I_{1}\right\|^2_{V}\leq C\tau^4{\mathbb E}\|{\bf Z}(t_n)\|^2_{H^2(D)^6}\leq C\tau^4.
\end{equation}
\vskip 0.5em
(ii) {\it Estimates of terms $I_2$, $I_3$ and $I_4$}
\vskip 0.5em
For the term $I_2$, It\^o isometry yields
\begin{align}\label{eq_4.8}
&{\mathbb E}\left\|I_2\right\|_V^{2}=  \int_{t_n}^{t_{n+1}}\left\|\Big(S(t_{n+1}-s)-S_{z}(\tau)S_y(\tau)S_x(t_{n+1}-s)\Big)\lambda^{[1]}\circ Q^{\frac12}\right\|_{HS(U,V)}^2{\rm d}s\\[2mm]
&\leq C \int_{t_n}^{t_{n+1}}\Big\| \Big(S_{z}(\tau)S_{y}(\tau)\left( S(t_{n+1}-s)-S_{x}(t_{n+1}-s)\Big)\lambda^{[1]}\circ Q^{\frac12} \right) \Big\|^{2}_{HS(U,V)}{\rm d}s\nonumber\\[2mm]
&\quad\quad+C \int_{t_n}^{t_{n+1}}\Big\|  \Big(S_{z}(\tau)\left(Id-S_y(\tau)\right)S(t_{n+1}-s)\Big)\lambda^{[1]}\circ Q^{\frac12}\Big \|^{2}_{HS(U,V)}{\rm d}s\nonumber\\[2mm]
&\quad\quad+C \int_{t_n}^{t_{n+1}}\Big\| \left(Id-S_{z}(\tau)\right)S(t_{n+1}-s)\lambda^{[1]}\circ Q^{\frac12} \Big\|^{2}_{HS(U,V)}{\rm d}s\nonumber\\[2mm]
&\leq C(|\lambda^{[1]}|, \|Q^{\frac12}\|_{HS(U,H^1(D))})\tau^3,\nonumber
\end{align}
where we have used the fact that $S(t)$ and $S_\alpha(t), \alpha=x,y,z$ are unitary $C_0$-semigroups and the inequalities
\[
\|S(s)-Id\|_{{\mathcal L}({\mathcal D}(M),V)} =s\|\alpha_1(sM)M \|_{{\mathcal L}({\mathcal D}(M),V)} \leq C\tau ,\quad \forall ~s\leq \tau
\]
and
\begin{equation*}\label{err_S}
\|S_{\alpha}(s)-Id\|_{{\mathcal L}({\mathcal D}(M_{\alpha}),V)}=s\|\alpha_1(sM_{\alpha})M_{\alpha} \|_{{\mathcal L}({\mathcal D}(M_{\alpha}),V)}\leq C\tau,\quad \forall ~s\leq \tau.
\end{equation*}
Estimates of  terms $I_3$ and $I_4$ are similar with
 \begin{equation}\label{eq_4.9}
 {\mathbb E}\|I_3\|_V^2+{\mathbb E}\|I_4\|_V^2\leq C(|\lambda^{[2]}|,|\lambda^{[3]}|, \|Q^{\frac12}\|_{HS(U,H^1(D))})\tau^3.
 \end{equation} 

Taking ${\mathbb E}\|\cdot\|_V^2$ on both sides of the error equation \eqref{eq_4.5} and combining estimates \eqref{eq_4.7}-\eqref{eq_4.9}, we obtain
\begin{equation*}
\begin{split}
\mathbb{E}\|e_{n+1}\|_V^2&\leq\mathbb{E}\|S_z(\tau)S_y(\tau)S_x(\tau)e_n\|_{V}^2  + 2{\mathbb E}\langle S_z(\tau)S_y(\tau)S_x(\tau)e_n, I_1 \rangle_{V}\\[2mm]
&\quad+ C\mathbb{E}\left(\|I_1\|_V^2+\|I_2\|_V^2+\|I_3\|_V^2+\|I_4\|_V^2\right)\\[2mm]
&\leq (1+C\tau)\mathbb{E}\|S_z(\tau)S_y(\tau)S_x(\tau)e_n\|_V^2+\frac{C}{\tau}\mathbb{E}\|I_1\|_V^2\\[2mm]
&\quad+C\mathbb{E}\left(\|I_1\|_V^2+\|I_2\|_V^2+\|I_3\|_V^2+\|I_4\|_V^2\right)\\[2mm]
&\leq(1+C\tau)\mathbb{E}\|e_n\|_V^2+C\tau^3,
\end{split}
\end{equation*}
due to the unitarity of semigroups $S_{\alpha}$ ($\alpha=x,y,z$). 

 Gronwall’s inequality completes the proof.
\end{proof}

\subsection{Temporally semi-discretized error}
In this subsection, each subsystem in {\bf Stages 1-3} is discretized temporally  by using 
numerical methods, for example, the implicit Euler method, the midpoint method, the exponential Euler method, etc. 
Notice that the midpoint method and the exponential Euler method preserve the stochastic symplectic structure.
We refer readers to \cite{CHJ2019a, CHJ2019b, CCHS2020} for the analysis of these methods for stochastic Maxwell equations, and to \cite{CHJS2021} for the probabilistic superiority of stochastic symplectic methods.  Let $({\bf E}^{n-1},{\bf H}^{n-1})$ denote the approximated solution at time $t_{n-1}$, and let $({\bf E}^{0},{\bf H}^{0})=({\bf E}_{0},{\bf H}_{0})$ and $\Delta W^n=W(t_n)-W(t_{n-1})$. We take the implicit Euler method for an example, and thus {\bf Stages 1-3}  become:
\begin{itemize}
\item {\bf Stage $1^{\prime}$.} Use the implicit Euler method to temporally solve \eqref{eq 4.55}, 
\begin{align*}
\left\{
\begin{aligned}
E_1^{n-1,*}=E_1^{n-1}+\lambda_1^1 \Delta W^n\\
H_1^{n-1,*}=H_1^{n-1}+\lambda_2^1\Delta W^n
\end{aligned}
\right. ,\quad
\left\{
\begin{aligned}
E_2^{n-1,*}=E_2^{n-1}-\tau \partial_x H_3^{n-1,*}\\
H_3^{n-1,*}=H_3^{n-1}-\tau  \partial_x E_2^{n-1,*}\end{aligned}
\right. ,\\
\left\{
\begin{aligned}
E_3^{n-1,*}=E_3^{n-1}+\tau \partial_x H_2^{n-1,*}\\
H_2^{n-1,*}=H_2^{n-1}+\tau  \partial_x E_3^{n-1,*}
\end{aligned}
\right. .
\end{align*}
\item {\bf Stage $2^{\prime}$.} Use the implicit Euler method to temporally solve \eqref{eq 4.56}, 
\begin{align*}
\left\{
\begin{aligned}
E_2^{n-1,**}=E_2^{n-1,*}+\lambda_1^2 \Delta W^n\\
H_2^{n-1,**}=H_2^{n-1,*}+\lambda_2^2\Delta W^n
\end{aligned}
\right. ,\quad
\left\{
\begin{aligned}
E_1^{n-1,**}=E_1^{n-1,*}+\tau \partial_y H_3^{n-1,**}\\
H_3^{n-1,**}=H_3^{n-1,*}+\tau  \partial_y E_1^{n-1,**}
\end{aligned}
\right. ,\\
\left\{
\begin{aligned}
E_3^{n-1,**}=E_3^{n-1,*}-\tau \partial_y H_1^{n-1,**}\\
H_1^{n-1,**}=H_1^{n-1,*}-\tau  \partial_y E_3^{n-1,**}
\end{aligned}
\right. .
\end{align*}
\item {\bf Stage $3^{\prime}$.} Use the implicit Euler method to temporally solve \eqref{eq 4.57}, 
\begin{align*}
\left\{
\begin{aligned}
E_3^{n}=E_3^{n-1,**}+\lambda_1^3 \Delta W^n\\
H_3^{n}=H_3^{n-1,**}+\lambda_2^3\Delta W^n
\end{aligned}
\right. ,\quad
\left\{
\begin{aligned}
E_1^{n}=E_1^{n-1,**}-\tau \partial_z H_2^{n}\\
H_2^{n}=H_2^{n-1,**}-\tau  \partial_z E_1^{n}\end{aligned}
\right. ,\\
\left\{
\begin{aligned}
E_2^{n}=E_2^{n-1,**}+\tau \partial_z H_1^{n}\\
H_1^{n}=H_1^{n-1,**}+\tau  \partial_z E_2^{n}
\end{aligned}
\right. .
\end{align*}
\end{itemize} 
%\begin{itemize}
%\item {\bf Stage $1^{\prime}$.} Use the midpoint method to solve \eqref{eq 4.55}, 
%\begin{align*}
%&\left\{
%\begin{aligned}
%E_1^{n,*}=E_1^{n}+\lambda_1^1 \Delta W\\
%H_1^{n,*}=H_1^{n}+\lambda_2^1\Delta W
%\end{aligned}
%\right. ,\quad
%\left\{
%\begin{aligned}
%E_2^{n,*}=E_2^n-\frac{\tau}{2} \Big(\partial_x H_3^{n,*}+\partial_x H_3^{n}\Big)\\
%H_3^{n,*}=H_3^n-\frac{\tau}{2}  \Big(\partial_x E_2^{n,*}+\partial_x E_2^{n}\Big)
%\end{aligned}
%\right. ,\\
%&\left\{
%\begin{aligned}
%E_3^{n,*}=E_3^n+\frac{\tau}{2} \Big(\partial_x H_2^{n,*}+\partial_x H_2^{n}\Big)\\
%H_2^{n,*}=H_2^n+\frac{\tau}{2}  \Big(\partial_x E_3^{n,*}+\partial_x E_3^{n}\Big)
%\end{aligned}
%\right. .
%\end{align*}
%\end{itemize} 

To avoid the confusion of notations, below we use $\Phi^{IE}_{t,t+\tau}({\bf Z})$, $\Phi^{M}_{t,t+\tau}({\bf Z})$ and $\Phi^{EE}_{t,t+\tau}({\bf Z})$ to denote the temporally semi-discrete solutions at $t+\tau$ starting from ${\bf Z}$ at time $t$ of stochastic Maxwell equations \eqref{sto_max} via the combination of the splitting and the  implicit Euler method, the midpoint method, the exponential Euler method, respectively.

Similar as \eqref{eq_4.3}, the one-step temporal approximations  read:
\begin{itemize}
\item Implicit Euler method
\begin{equation*}
\begin{split}
\Phi^{IE}_{t,t+\tau}({\bf Z})=&S_{\tau,z}^{IE}S_{\tau,y}^{IE}S_{\tau,x}^{IE}{\bf Z}+S_{\tau,z}^{IE}S_{\tau,y}^{IE}S_{\tau,x}^{IE}\lambda^{[1]}\Delta W\\[2mm]
&+S_{\tau,z}^{IE}S_{\tau,y}^{IE}\lambda^{[2]}\Delta W + S_{\tau,z}^{IE}\lambda^{[3]}\Delta W,
\end{split}
\end{equation*}
where {$S_{\tau,\alpha}^{IE}=(Id-\tau M_{\alpha})^{-1}$ with $\alpha=x,y,z$}, and $\Delta W=W(t+\tau)-W(t)$.
\item Midpoint method
\begin{equation*}
\begin{split}
\Phi^{M}_{t,t+\tau}({\bf Z})=&S_{\tau,z}^{M}S_{\tau,y}^{M}S_{\tau,x}^{M}{\bf Z}+S_{\tau,z}^{M}S_{\tau,y}^{M}T_{\tau,x}^{M}\lambda^{[1]}\Delta W\\[2mm]
&+S_{\tau,z}^{M}T_{\tau,y}^{M}\lambda^{[2]}\Delta W + T_{\tau,z}^{M}\lambda^{[3]}\Delta W,
\end{split}
\end{equation*}
where $S_{\tau,\alpha}^{M}=(Id-\frac{\tau}{2} M_{\alpha})^{-1}(Id+\frac{\tau}{2}M_{\alpha})$, $T_{\tau,\alpha}^{M}=(Id-\frac{\tau}{2} M_{\alpha})^{-1}$ with $\alpha=x,y,z$, and $\Delta W=W(t+\tau)-W(t)$.
\item Exponential Euler method
\begin{equation*}
\begin{split}
\Phi^{EE}_{t,t+\tau}({\bf Z})=&S_{z}(\tau)S_{y}(\tau)S_{x}(\tau){\bf Z}+S_{z}(\tau)S_{y}(\tau)S_{x}(\tau)\lambda^{[1]}\Delta W\\[2mm]
&+
S_{z}(\tau)S_{y}(\tau)\lambda^{[2]}\Delta W + S_{z}(\tau)\lambda^{[3]}\Delta W,
\end{split}
\end{equation*}
where $S_{\alpha}(\tau)=e^{\tau M_{\alpha}}$ with $\alpha=x,y,z$, and $\Delta W=W(t+\tau)-W(t)$.
\end{itemize}

For the analysis of the numerical error of the above three numerical methods, the following two lemmas are introduced.
\begin{lemma}\label{lem_001}
It holds
\begin{equation*}
\begin{split}
&\left|S_{\tau,\alpha}^{IE}\right|_{{\mathcal L}(V,V)}\leq 1,\quad
\left|S_{\tau,\alpha}^{M}\right|_{{\mathcal L}(V,V)}= 1,\\
&\left|T_{\tau,\alpha}^{M}\right|_{{\mathcal L}(V,V)}\leq 1,\quad \left|T_{\tau,\alpha}^{M}-Id\right|_{{\mathcal L}(V,V)}\leq C\tau.
\end{split}
\end{equation*}
\end{lemma}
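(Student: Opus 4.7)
The plan is to exploit the skew-adjointness of $M_\alpha$ on $V$ (established earlier in the paper), together with the Hilbert-space identity $\langle M_\alpha v, v\rangle_V = 0$ that skew-adjointness entails on $\mathcal{D}(M_\alpha)$. All four bounds reduce to elementary algebraic manipulations of the resolvent $(Id - \sigma M_\alpha)^{-1}$ or the Cayley transform, and I would treat them in the order $S^{IE}_{\tau,\alpha}$, $T^M_{\tau,\alpha}$, $S^M_{\tau,\alpha}$, then $T^M_{\tau,\alpha}-Id$.

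First, for $S^{IE}_{\tau,\alpha} = (Id - \tau M_\alpha)^{-1}$, I would set $v = S^{IE}_{\tau,\alpha} u$ for arbitrary $u\in V$, so $u = (Id-\tau M_\alpha)v$ with $v\in \mathcal D(M_\alpha)$. Expanding gives
\begin{equation*}
\|u\|_V^2 = \|v\|_V^2 - 2\tau\langle M_\alpha v, v\rangle_V + \tau^2\|M_\alpha v\|_V^2 = \|v\|_V^2 + \tau^2\|M_\alpha v\|_V^2 \geq \|v\|_V^2,
\end{equation*}
where the cross term vanishes by skew-adjointness. The identical argument with $\tau$ replaced by $\tau/2$ yields $\|T^M_{\tau,\alpha}\|_{\mathcal L(V,V)}\leq 1$.

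Next, for the Cayley transform $S^M_{\tau,\alpha} = (Id - \tfrac{\tau}{2}M_\alpha)^{-1}(Id + \tfrac{\tau}{2}M_\alpha)$, the same expansion applied to both factors gives, for $w\in\mathcal D(M_\alpha)$,
\begin{equation*}
\bigl\|(Id + \tfrac{\tau}{2}M_\alpha)w\bigr\|_V^2 = \|w\|_V^2 + \tfrac{\tau^2}{4}\|M_\alpha w\|_V^2 = \bigl\|(Id - \tfrac{\tau}{2}M_\alpha)w\bigr\|_V^2.
\end{equation*}
Choosing $w = (Id - \tfrac{\tau}{2}M_\alpha)^{-1}u$ produces $\|S^M_{\tau,\alpha} u\|_V = \|u\|_V$, i.e.\ equality of operator norms.

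Finally, for $T^M_{\tau,\alpha} - Id$ I would use the telescoping identity
\begin{equation*}
T^M_{\tau,\alpha} - Id = (Id - \tfrac{\tau}{2}M_\alpha)^{-1}\bigl[Id - (Id - \tfrac{\tau}{2}M_\alpha)\bigr] = \tfrac{\tau}{2}\,T^M_{\tau,\alpha} M_\alpha,
\end{equation*}
and then combine it with the bound $\|T^M_{\tau,\alpha}\|_{\mathcal L(V,V)}\leq 1$ from the earlier step. The only genuinely subtle point — and the place where a referee might raise an eyebrow — is that $M_\alpha$ is unbounded on $V$, so this last estimate has to be read as a bound from $\mathcal D(M_\alpha)$ (equipped with the graph norm) into $V$; the resulting constant is then $C=\tfrac12$ times the norm of $M_\alpha$ as a map $\mathcal D(M_\alpha)\to V$, which is $1$. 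Apart from this domain bookkeeping, no step in the proof is technically deep; the substance of the lemma is already packaged inside the skew-adjointness result for $M_\alpha$.
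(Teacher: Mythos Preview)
Your argument is correct and is exactly the standard one; the paper in fact omits the proof entirely, stating only that it ``is standard, thus is omitted.'' Your observation about the fourth bound is well taken: as literally written in $\mathcal{L}(V,V)$ the estimate $\left|T^M_{\tau,\alpha}-Id\right|\leq C\tau$ cannot hold for an unbounded skew-adjoint $M_\alpha$ (spectral calculus gives $\sup_{\mu\in\mathbb{R}}\frac{\tau|\mu|/2}{\sqrt{1+\tau^2\mu^2/4}}=1$, independent of $\tau$), so the bound must indeed be read as $\mathcal{L}(\mathcal{D}(M_\alpha),V)\to V$, which is precisely how the paper uses it downstream when applying the operator to $\lambda^{[j]}\circ Q^{1/2}$ with $Q^{1/2}\in HS(U,H^1(D))$.
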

The proof of this lemma is standard, thus is omitted.
\begin{lemma}\label{lem_002}
For the implicit Euler method and the midpoint method, there exists a constant $C$ independent of $\tau$ such that \\
\begin{itemize}
\item [(i)]$
 ~\|S(\tau){\bf Z}-S_{\tau,z}^{IE}S_{\tau,y}^{IE}S_{\tau,x}^{IE}{\bf Z}\|_{V}\leq C\tau^2\|{\bf Z}\|_{H^2(D)^6}.
$\\
\item[(ii)]$
~\|S(\tau){\bf Z}-S_{\tau,z}^{M}S_{\tau,y}^{M}S_{\tau,x}^{M}{\bf Z}\|_{V}\leq C\tau^2\|{\bf Z}\|_{H^2(D)^6}.
$
\end{itemize}
\end{lemma}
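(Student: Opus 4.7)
The plan is to insert the exact split semigroup product $S_z(\tau)S_y(\tau)S_x(\tau){\bf Z}$ as an intermediate and to control the two resulting pieces via the triangle inequality. The first piece, $\|(S(\tau)-S_z(\tau)S_y(\tau)S_x(\tau)){\bf Z}\|_V$, is exactly the quantity $\|I_1\|_V$ already analyzed in the proof of Theorem~\ref{thm_4.1}: the variation-of-constants identity combined with the expansions $S_\alpha(\tau)=Id+\tau M_\alpha+\tau^2\alpha_2(\tau M_\alpha)M_\alpha^2$ yields a bound of the form $C\tau^2\|{\bf Z}\|_{H^2(D)^6}$, which I would reuse verbatim. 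It remains only to handle the second piece $\|(S_z(\tau)S_y(\tau)S_x(\tau)-S_{\tau,z}^{\mathrm{IE}}S_{\tau,y}^{\mathrm{IE}}S_{\tau,x}^{\mathrm{IE}}){\bf Z}\|_V$ with the same rate.

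For this I would invoke the standard telescoping
\begin{equation*}
S_zS_yS_x-S_{\tau,z}^{\mathrm{IE}}S_{\tau,y}^{\mathrm{IE}}S_{\tau,x}^{\mathrm{IE}}=(S_z-S_{\tau,z}^{\mathrm{IE}})S_yS_x+S_{\tau,z}^{\mathrm{IE}}(S_y-S_{\tau,y}^{\mathrm{IE}})S_x+S_{\tau,z}^{\mathrm{IE}}S_{\tau,y}^{\mathrm{IE}}(S_x-S_{\tau,x}^{\mathrm{IE}}),
\end{equation*}
paired with the factorization
\begin{equation*}
S_\alpha(\tau)-(Id-\tau M_\alpha)^{-1}=\tau^2\bigl[\alpha_2(\tau M_\alpha)-(Id-\tau M_\alpha)^{-1}\bigr]M_\alpha^2,
\end{equation*}
which drops out of the identity $(Id-\tau M_\alpha)^{-1}=Id+\tau M_\alpha+\tau^2 M_\alpha^2(Id-\tau M_\alpha)^{-1}$ together with the second-order Taylor expansion of $S_\alpha(\tau)$. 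Because $M_\alpha^2$ is a constant-coefficient second-order differential operator in the single variable $\alpha$, $\|M_\alpha^2{\bf v}\|_V\le C\|{\bf v}\|_{H^2(D)^6}$; combined with the unitarity of $S_\alpha(\tau)$ and the $V$-stability $\|S_{\tau,\alpha}^{\mathrm{IE}}\|_{\mathcal{L}(V,V)}\le 1$ furnished by Lemma~\ref{lem_001}, this delivers the required $O(\tau^2)$ bound as soon as the intermediate states $S_x(\tau){\bf Z}$ and $S_y(\tau)S_x(\tau){\bf Z}$ are controlled in $H^2(D)^6$ by $\|{\bf Z}\|_{H^2(D)^6}$. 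Part~(ii) proceeds in parallel: the analogous factorization $S_{\tau,\alpha}^M=Id+\tau M_\alpha+\frac{\tau^2}{2}T_{\tau,\alpha}^M M_\alpha^2$ (obtained from $T_{\tau,\alpha}^M=Id+\frac{\tau}{2}T_{\tau,\alpha}^M M_\alpha$) gives the same $O(\tau^2)$ local bound on $H^2(D)^6$ — in fact $O(\tau^3)$ on $\mathcal{D}(M_\alpha^3)$, though this sharper rate is unnecessary here — and stability of $S_{\tau,\alpha}^M$ is again supplied by Lemma~\ref{lem_001}.

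The step I expect to be the main obstacle is justifying the $H^2$-regularity control of the intermediate states $S_x(\tau){\bf Z}$ and $S_y(\tau)S_x(\tau){\bf Z}$. The one-dimensional Maxwell operators $M_x,M_y,M_z$ fail to pairwise commute — a direct matrix computation shows $[\mathrm{curl}_x,\mathrm{curl}_y]\neq 0$ — so $S_\beta(\tau)$ does not preserve $\mathcal{D}(M_\alpha^k)$ for $\alpha\neq\beta$ in general. Nevertheless each $M_\alpha$ has constant coefficients with derivatives in the single variable $\alpha$ and therefore commutes \emph{formally} with $\partial_\gamma$ for $\gamma\neq\alpha$; respecting the homogeneous boundary conditions encoded in $\mathcal{D}(M_\alpha)$, a standard energy estimate applied to $\partial_\gamma^k S_\alpha(\tau){\bf v}$ propagates the $H^2$-norm with a bounded constant and closes the argument.
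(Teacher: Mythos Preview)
Your proposal is correct and follows essentially the same route as the paper: triangle inequality via the exact split product $S_z(\tau)S_y(\tau)S_x(\tau){\bf Z}$, reuse of the $I_1$ estimate from Theorem~\ref{thm_4.1}, and the identical telescoping combined with the second-order factorizations $S_\alpha(\tau)-S_{\tau,\alpha}^{\mathrm{IE}}=\tau^2[\alpha_2(\tau M_\alpha)-(Id-\tau M_\alpha)^{-1}]M_\alpha^2$ and $S_{\tau,\alpha}^{M}=Id+\tau M_\alpha+\frac{\tau^2}{2}(Id-\frac{\tau}{2}M_\alpha)^{-1}M_\alpha^2$. You are in fact more careful than the paper, which writes the telescoping and concludes $\leq C\tau^2\|{\bf Z}\|_{H^2(D)^6}$ without explicitly addressing the $H^2$-control of the intermediate states $S_x(\tau){\bf Z}$ and $S_y(\tau)S_x(\tau){\bf Z}$ that you flag as the main obstacle.
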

\begin{proof} {\it(i)}
It suffices to prove $$\|S_{z}(\tau)S_{y}(\tau)S_{x}(\tau){\bf Z}-S_{\tau,z}^{IE}S_{\tau,y}^{IE}S_{\tau,x}^{IE}{\bf Z}\|_{V}\leq C\tau^2\|{\bf Z}\|_{H^2(D)^6},$$ based on the estimate of the term $I_1$ in the proof of Theorem \ref{thm_4.1}.
For $\alpha=x,y,z$, denote $a=\tau M_{\alpha}$ and $\xi=(Id-a)^{-1}$. Then
\[
\xi=Id+\xi a =Id+(Id+\xi a)a=Id+a+\xi a^2,
\]
i.e., $S_{\tau,\alpha}^{IE}=Id+\tau M_{\alpha}+\tau^2 \xi M_{\alpha}^2$, where $\xi$ is a bounded operator.
Recall the relation
\[
S_{\alpha}(\tau)=Id+\tau M_{\alpha} +\tau^2 \alpha_2(\tau M_{\alpha})M_{\alpha}^2,
\]
we have for any $u\in {\mathcal D}(M_{\alpha}^2)$
\begin{align*}
\|(S_{\alpha}(\tau)-S_{\tau,\alpha}^{IE})u\|_{V}=\|\tau^2 \alpha_2(\tau M_{\alpha})M_{\alpha}^2 u-\tau^2 \xi M_{\alpha}^2 u\|_{V}\leq C\tau^2\|u\|_{{\mathcal D}(M_{\alpha}^2)}.
\end{align*}
Therefore,
\begin{align*}
&\|S_{z}(\tau)S_{y}(\tau)S_{x}(\tau){\bf Z}-S_{\tau,z}^{IE}S_{\tau,y}^{IE}S_{\tau,x}^{IE}{\bf Z}\|_{V}\\
&\leq \| (S_{z}(\tau)-S_{\tau,z}^{IE})S_{y}(\tau)S_{x}(\tau){\bf Z} \|_{V}
+\|S_{\tau,z}^{IE}(S_{y}(\tau)-S_{\tau,y}^{IE})S_{x}(\tau){\bf Z}\|_{V}\\
&\quad+\|S_{\tau,z}^{IE}S_{\tau,y}^{IE}(S_{x}(\tau)-S_{\tau,x}^{IE}){\bf Z}\|_{V}\\
&\leq C\tau^2\|{\bf Z}\|_{H^2(D)^6}.
\end{align*}

{\it (ii)}
Similarly, by denoting  $a=\frac{\tau}{2} M_{\alpha}$ and $\xi=(Id-a)^{-1}$, we have
\begin{align*}
S_{\tau,\alpha}^{M}=&(Id+\frac{\tau}{2}\xi M_{\alpha})(Id+\frac{\tau}{2} M_{\alpha})
=Id+\frac{\tau}{2} M_{\alpha}+\frac{\tau}{2}\xi M_{\alpha}+\frac{\tau^2}{4}\xi M_{\alpha}^2\\
=&Id+\frac{\tau}{2} M_{\alpha}+\frac{\tau}{2}(Id+\frac{\tau}{2}\xi M_{\alpha}) M_{\alpha}+\frac{\tau^2}{4}\xi M_{\alpha}^2
=Id+\tau M_{\alpha}+\frac{\tau^2}{2}\xi M_{\alpha}^2.
\end{align*}
Thus
\begin{align*}
\|(S_{\alpha}(\tau)-S_{\tau,\alpha}^{M})u\|_{V}=\|\tau^2 \alpha_2(\tau M_{\alpha})M_{\alpha}^2 u-\frac{\tau^2}{2} \xi M_{\alpha}^2 u\|_{V}\leq C\tau^2\|u\|_{{\mathcal D}(M_{\alpha}^2)}.
\end{align*}
The rest step is similar as in {\it (i)}. The proof is completed.
\end{proof}

Denote the  discrete composed solution at $t_n$ by $${{\bf Z}}^n=\Phi_{t_{n-1},t_n}\circ \Phi_{t_{n-2},t_{n-1}}\circ\cdots\circ \Phi_{t_{0},t_1}({\bf Z}_0)$$  with $\Phi_{t,t+\tau}\in\{\Phi^{IE}_{t,t+\tau},~\Phi^{M}_{t,t+\tau},~\Phi^{EE}_{t,t+\tau}\}$. Set ${\hat e}^n:={\bf Z}(t_n)-{{\bf Z}}^n$. The mean square convergence result is stated below.
\begin{theorem}
Under   the same assumption as in Proposition \ref{pro_2.2}, for sufficiently small $\tau$, there exists a constant $C$ independent of $\tau$ such that
\[
\max_{0\leq n\leq N}\Big( {\mathbb E} \left\| \hat{e}^n\right\|_{V}^{2} \Big)^{1/2}\leq C\tau.
\]
\end{theorem}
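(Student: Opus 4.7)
The plan is to follow the classical one-step error analysis: decompose the global error into a local truncation error plus a stably propagated part, then close with a discrete Gronwall inequality. First I would introduce the local error $R_{n+1} := {\bf Z}(t_{n+1}) - \Phi_{t_n,t_{n+1}}({\bf Z}(t_n))$ and write
\[
\hat{e}_{n+1} = \Phi_{t_n,t_{n+1}}({\bf Z}(t_n)) - \Phi_{t_n,t_{n+1}}({\bf Z}^n) + R_{n+1}.
\]
Because each of the three schemes is linear in its argument and in the noise, the first difference equals one of the bounded operators $S_{\tau,z}^{IE}S_{\tau,y}^{IE}S_{\tau,x}^{IE}$, $S_{\tau,z}^{M}S_{\tau,y}^{M}S_{\tau,x}^{M}$, or $S_{z}(\tau)S_{y}(\tau)S_{x}(\tau)$ applied to $\hat{e}_n$. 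By Lemma \ref{lem_001} and the unitarity of the exponential semigroups, each of these composed operators is non-expansive on $V$, which supplies the stability needed in the recursion.

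Next I would split $R_{n+1}$ into its deterministic part (the discrepancy between $S(\tau){\bf Z}(t_n)$ and the split discrete flow applied to ${\bf Z}(t_n)$) and three stochastic parts, one for each of the increments $\lambda^{[i]}\Delta W$ in Stages $1'$--$3'$. The deterministic part is bounded by $C\tau^2\|{\bf Z}(t_n)\|_{H^2(D)^6}$ via Lemma \ref{lem_002} for the IE and midpoint methods, and via the estimate of $I_1$ in the proof of Theorem \ref{thm_4.1} for the exponential Euler method. For each stochastic part I would use It\^o isometry to obtain
\[
{\mathbb E}\|\cdot\|_V^2 = \int_{t_n}^{t_{n+1}} \bigl\|\bigl[S(t_{n+1}-s) - G_s^{[i]}\bigr]\lambda^{[i]}\circ Q^{\frac12}\bigr\|_{HS(U,V)}^2{\rm d}s,
\]
where $G_s^{[i]}$ is the operator prefactor in front of $\lambda^{[i]}\Delta W$ in the chosen scheme. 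Writing $S(t_{n+1}-s) - G_s^{[i]} = [S(t_{n+1}-s) - S(\tau)] + [S(\tau) - G_s^{[i]}]$, the first bracket is $O(\tau)$ in ${\mathcal L}({\mathcal D}(M),V)$ as used in the proof of Theorem \ref{thm_4.1}, and the second is $O(\tau^2)$ in ${\mathcal L}({\mathcal D}(M^2),V)$ by (the proof of) Lemma \ref{lem_002}. Since $Q^{\frac12}\in HS(U,H^2(D))$ and $\lambda^{[i]}\in{\mathbb R}^6$, the integrand is $O(\tau^2)$ and each stochastic contribution is $O(\tau^3)$ in mean square.

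Combining these ingredients and applying Young's inequality to the cross term in ${\mathbb E}\|\hat{e}_{n+1}\|_V^2$ (with splitting $2\langle\cdot,R_{n+1}^{\mathrm{det}}\rangle \leq \tau\|\cdot\|_V^2 + \tau^{-1}\|R_{n+1}^{\mathrm{det}}\|_V^2$) yields the recursion ${\mathbb E}\|\hat{e}_{n+1}\|_V^2 \leq (1+C\tau){\mathbb E}\|\hat{e}_n\|_V^2 + C\tau^3$, from which discrete Gronwall gives the claim. The main obstacle is handling the three schemes uniformly: for the midpoint scheme the extra operators $T_{\tau,\alpha}^M = (Id - \tfrac{\tau}{2}M_\alpha)^{-1}$ appear in the stochastic prefactors, and one must insert/extract $Id$ using $\|T_{\tau,\alpha}^M - Id\|_{{\mathcal L}(V,V)}\leq C\tau$ from Lemma \ref{lem_001} to bring the stochastic local error into the same form as above. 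A secondary technical point is ensuring the compatibility of $\lambda^{[i]}\circ Q^{\frac12}$ with the $H^2$-regularity assumptions of Proposition \ref{pro_2.2}, which should be verified so that the operator differences act on arguments in the correct domain.
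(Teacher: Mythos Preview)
Your proposal is correct and follows essentially the same architecture as the paper's proof: non-expansive stability of the composed discrete flow (Lemma~\ref{lem_001}), local error split into a deterministic part controlled by Lemma~\ref{lem_002} (or the $I_1$ estimate from Theorem~\ref{thm_4.1}) and three stochastic parts handled via It\^o isometry, then Young's inequality on the cross term and discrete Gronwall. The only substantive difference is in how you decompose the stochastic operator discrepancy: the paper telescopes $S(t_{n+1}-s)-G^{[i]}$ by peeling off one discrete factor at a time, whereas you route through $S(\tau)$.

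One small correction: your claim that $[S(\tau)-G^{[i]}]$ is $O(\tau^2)$ in $\mathcal{L}(\mathcal{D}(M^2),V)$ ``by (the proof of) Lemma~\ref{lem_002}'' is only valid when $G^{[i]}$ is the full three-fold product; for $i=2,3$ the prefactors $G^{[2]},G^{[3]}$ omit one or two of the split semigroups, so e.g.\ $S(\tau)-S_z(\tau)=\tau(M_x+M_y)+O(\tau^2)$ is merely $O(\tau)$. This is harmless, since the first bracket $[S(t_{n+1}-s)-S(\tau)]$ is already only $O(\tau)$ and that is all you need for the integrand to be $O(\tau^2)$ and the It\^o integral to contribute $O(\tau^3)$; but you should state the correct order there (and then only $Q^{\frac12}\in HS(U,H^1(D))$ is required for the stochastic terms, matching the paper).
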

\begin{proof}
We take the midpoint method as an example, since the proofs of the implicit Euler method and the exponential Euler method are similar.
Recall that 
\begin{equation*}
\begin{split}
{{\bf Z}}^{n+1}=&S_{\tau,z}^{M}S_{\tau,y}^{M}S_{\tau,x}^{M}{{\bf Z}}^n+S_{\tau,z}^{M}S_{\tau,y}^{M}T_{\tau,x}^{M}\lambda^{[1]}\Delta W^{n+1}\\[2mm]
&+S_{\tau,z}^{M}T_{\tau,y}^{M}\lambda^{[2]}\Delta W^{n+1} + T_{\tau,z}^{M}\lambda^{[3]}\Delta W^{n+1},
\end{split}
\end{equation*}
where $\Delta W^{n+1}=W(t_{n+1})-W(t_n)$.
Subtracting it from \eqref{eq_4.4} leads to
\begin{equation*}
{\hat e}^{n+1}=S_{\tau,z}^{M}S_{\tau,y}^{M}S_{\tau,x}^{M}\hat{e}^n
+{\hat I}_1+{\hat I}_2+{\hat I}_3+{\hat I}_4,
\end{equation*}
with  
\begin{equation*}
\begin{split}
{\hat I}_1=&\Big(S(\tau)-S_{\tau,z}^{M}S_{\tau,y}^{M}S_{\tau,x}^{M}\Big){\bf Z}(t_n),\\[2mm]
{\hat I}_2=&\int_{t_n}^{t_{n+1}}\Big[S(t_{n+1}-r)-S_{\tau,z}^{M}S_{\tau,y}^{M}T_{\tau,x}^{M}\Big]\lambda^{[1]}{\rm d}W(r),\\[2mm]
{\hat I}_3=&\int_{t_n}^{t_{n+1}}\Big[S(t_{n+1}-r)-S_{\tau,z}^{M}T_{\tau,y}^{M}\Big]\lambda^{[2]}{\rm d}W(r),\\[2mm]
{\hat I}_4=&\int_{t_n}^{t_{n+1}}\Big[S(t_{n+1}-r)-T_{\tau,z}^{M}\Big]\lambda^{[3]}{\rm d}W(r).
\end{split}
\end{equation*}
Utilizing Lemma \ref{lem_002} {\it (ii)},
\begin{equation*}
\begin{split}
\mathbb{E}\|{\hat I}_1\|_V^2
\leq C\tau^4\mathbb{E}\|{\bf Z}(t_n)\|_{H^2(D)^6}^2\leq C\tau^4.
\end{split}
\end{equation*}
For the term ${\hat I}_2$,
\begin{align*}
{\mathbb E}\left\|{\hat I}_2\right\|_V^{2}&=  \int_{t_n}^{t_{n+1}}\left\|\Big(S(t_{n+1}-s)-S_{\tau,z}^{M}S_{\tau,y}^{M}T_{\tau,x}^{M}\Big)\lambda^{[1]}\circ Q^{\frac12}\right\|_{HS(U,V)}^2{\rm d}s\\[2mm]
&\leq C \int_{t_n}^{t_{n+1}}\Big\| \Big(S_{\tau,z}^{M}S_{\tau,y}^{M}\left( S(t_{n+1}-s)-T_{\tau,x}^{M}\Big)\lambda^{[1]}\circ Q^{\frac12} \right) \Big\|^{2}_{HS(U,V)}{\rm d}s\\[2mm]
&\quad\quad+C \int_{t_n}^{t_{n+1}}\Big\|  \Big(S_{\tau,z}^{M}\left(Id-S_{\tau,y}^{M}\right)S(t_{n+1}-s)\Big)\lambda^{[1]}\circ Q^{\frac12}\Big \|^{2}_{HS(U,V)}{\rm d}s\\[2mm]
&\quad\quad+C \int_{t_n}^{t_{n+1}}\Big\| \left(Id-S_{\tau,z}^{M}\right)S(t_{n+1}-s)\lambda^{[1]}\circ Q^{\frac12} \Big\|^{2}_{HS(U,V)}{\rm d}s\\[2mm]
&\leq C(|\lambda^{[1]}|, \|Q^{\frac12}\|_{HS(U,H^1(D))})\tau^3,
\end{align*}
due to Lemma \ref{lem_001}.  Similarly, we can obtain
\begin{equation*}
{\mathbb E}\left\|{\hat I}_3\right\|_V^{2}\leq C(|\lambda^{[2]}|,\|Q^{\frac12}\|_{HS(U,H^1(D))})\tau^3,~~ {\mathbb E}\left\|{\hat I}_4\right\|_V^{2}\leq C(|\lambda^{[3]}|, \|Q^{\frac12}\|_{HS(U,H^1(D))})\tau^3.
\end{equation*}
Combing the above results, we get
\begin{equation*}
\begin{split}
\mathbb{E}\|\hat{e}_{n+1}\|_V^2&\leq\mathbb{E}\|S_{\tau,z}^{M}S_{\tau,y}^{M}S_{\tau,x}^{M}\hat{e}_n\|_{V}^2  + 2{\mathbb E}\langle S_{\tau,z}^{M}S_{\tau,y}^{M}S_{\tau,x}^{M}\hat{e}_n, \hat{I}_1 \rangle_{V}\\[2mm]
&\quad+ C\mathbb{E}\left(\|\hat{I}_1\|_V^2+\|\hat{I}_2\|_V^2+\|\hat{I}_3\|_V^2+\|\hat{I}_4\|_V^2\right)\\[2mm]
&\leq (1+C\tau)\mathbb{E}\|S_{\tau,z}^{M}S_{\tau,y}^{M}S_{\tau,x}^{M}\hat{e}_n\|_V^2+\frac{C}{\tau}\mathbb{E}\|\hat{I}_1\|_V^2\\[2mm]
&\quad+C\mathbb{E}\left(\|\hat{I}_1\|_V^2+\|\hat{I}_2\|_V^2+\|\hat{I}_3\|_V^2+\|\hat{I}_4\|_V^2\right)\\[2mm]
&\leq(1+C\tau)\mathbb{E}\|e_n\|_V^2+C\tau^3.
\end{split}
\end{equation*}
Thus the proof is completed by using Gronwall's inequality.
\end{proof}

\bibliographystyle{plain}
\bibliography{maxwell}
\end{document}